\begin{document}

\title{Scalar extensions of categorical resolutions of singularities}
\author{Zhaoting Wei}
\address{Kent State University at Geauga, 14111 Claridon-Troy Road, Burton, OH 44021}
\email{zwei3@kent.edu}

\newcommand{\coh}{\text{coh}}
\newcommand{\Qcoh}{\text{Qcoh}}
\newcommand{\perf}{\text{perf}}
\newcommand{\Perf}{\text{Perf}}
\newcommand{\Hom}{\text{Hom}}
\newcommand{\End}{\text{End}}
\newcommand{\Pic}{\text{Pic}}
\newcommand{\red}{\text{red}}
\newcommand{\Spec}{\text{Spec}}
\newcommand{\Frac}{\text{Frac}}
\newcommand{\rD}{\mathrm{D}}
\newcommand{\LInd}{\text{LInd}}
\newcommand{\Ch}{\text{Ch}}

\newtheorem{thm}{Theorem}[section]
\newtheorem{lemma}[thm]{Lemma}
\newtheorem{prop}[thm]{Proposition}
\newtheorem{coro}[thm]{Corollary}
\newtheorem{conj}{Conjecture}
\theoremstyle{definition}\newtheorem{defi}{Definition}[section]
\theoremstyle{remark}\newtheorem{eg}{Example}
\theoremstyle{remark}\newtheorem{rmk}{Remark}
\theoremstyle{remark}\newtheorem{ques}{Question}

\maketitle


\begin{abstract}
Let $X$ be a quasi-compact, separated scheme over a field $k$ and we can consider the categorical resolution of singularities of $X$.  In this paper let $k^{\prime}/k$ be a field extension and we study the scalar extension of a categorical resolution of singularities of $X$ and we show how it gives a categorical resolution of the base change scheme $X_{k^{\prime}}$. Our construction involves the scalar extension of derived categories of DG-modules over a DG algebra. As an application we use the technique of scalar extension developed in this paper to prove the non-existence of full exceptional collections of categorical resolutions for a projective curve of genus $\geq 1$ over a non-algebraically closed field.
\end{abstract}

\section{Introduction}
For a scheme $X$ over a field $k$, the derived category $\rD(X)$ of quasi-coherent $\mathcal{O}_X$-modules plays an important role in the study of the geometry of $X$. In particular, a \emph{categorical resolution} of singularities of $X$ is defined to be a smooth triangulated category $\mathscr{T}$ together with an adjoint pair $\pi^*: \rD(X)\rightleftarrows \mathscr{T}: \pi_*$ which satisfies certain properties. See \cite{kuznetsov2014categorical} or Definition \ref{defi: categorical resolution} below for details.

On the other hand, base change techniques are also ubiquitous in algebraic geometry. In \cite{sosna2014scalar}, a theory of scalar extensions of triangulated categories has been developed and applied to derived categories of varieties.

In this paper we   define and study the scalar extension of categorical resolutions. The difficulty is to find the scalar extensions of the adjoint pair $(\pi^*,\pi_*)$. To solve this problem we modify the definition of categorical resolution: inspired by \cite{lunts2010categorical}, we define an \emph{algebraic categorical resolution} of $X$ to be a triple $(A,B,T)$ where $A$ is a differential graded (DG) algebra such that $\rD(X)\simeq \rD(A)$, $B$ is a smooth DG algebra and $T$ is an $A$-$B$ bimodule which satisfies certain properties. See Definition \ref{defi: categorical resolution our form} below for more details. In some important cases, which include the cases we are most interested in, these two definitions are equivalent. For the comparison of different definitions of categorical resolution see Proposition \ref{prop: a categorical resolution gives a categorical resolution of dga} below.

The advantage of algebraic categorical resolution is that it is compatible with base field extensions. One of the main results in this paper is the following proposition.

\begin{prop}[See Proposition \ref{prop: scalar extension of a categorical resolution is a categocial resolution} below]\label{prop: scalar extension of a categorical resolution is a categocial resolution introduction}[See Proposition \ref{prop: scalar extension of a categorical resolution is a categocial resolution} below]
Let $X$ be a projective variety over a field $k$. If  $(A,B,T)$ is an algebraic categorical resolution of $X$, then $(A_{k^{\prime}},B_{k^{\prime}},T_{k^{\prime}})$ is an algebraic categorical resolution of the base change variety $X_{k^{\prime}}$.
\end{prop}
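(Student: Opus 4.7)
The plan is to check that each of the three conditions in Definition \ref{defi: categorical resolution our form} is preserved under the scalar extension $-\otimes_k k'$: namely (i) $\rD(X_{k'})\simeq \rD(A_{k'})$; (ii) $B_{k'}$ is smooth as a DG algebra over $k'$; and (iii) $T_{k'}$ satisfies the bimodule properties making the induced adjoint pair a categorical resolution of $X_{k'}$. The guiding principle is that $-\otimes_k k'$ is exact and symmetric monoidal, so the derived functors in play should base change in a strict way.

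For (i), since $X$ is projective, the equivalence $\rD(X)\simeq \rD(A)$ is induced by a compact generator $E\in\rD(X)$ with $A\simeq \mathrm{RHom}_X(E,E)$. I would pull $E$ back to $E_{k'}\in \rD(X_{k'})$ along the faithfully flat morphism $X_{k'}\to X$, verify that compactness and generation survive this base change, and use flatness of $k'/k$ to identify $\mathrm{RHom}_{X_{k'}}(E_{k'},E_{k'})$ with $A_{k'}$. These identifications are essentially the scalar extension package developed in \cite{sosna2014scalar}. For (ii), smoothness of $B$ means $B$ is perfect over the enveloping algebra $B^e=B\otimes_k B^{\mathrm{op}}$, and a perfect resolution of $B$ over $B^e$ base changes to a perfect resolution of $B_{k'}$ over $(B_{k'})^e$, yielding smoothness of $B_{k'}$.

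For (iii), the adjoint pair $(-\otimes^L_A T,\mathrm{RHom}_B(T,-))$ base changes to $(-\otimes^L_{A_{k'}} T_{k'},\mathrm{RHom}_{B_{k'}}(T_{k'},-))$, and the associated unit and counit natural transformations base change correspondingly. Thus the structural properties of $T$ — typically that $T$ be perfect on one side and that the adjunction morphisms be isomorphisms on the subcategory $\Perf(A)$ — should descend to $T_{k'}$ as soon as quasi-isomorphisms are preserved by $-\otimes_k k'$ (immediate from flatness) and $\Perf(A_{k'})$ is controlled by the image of $\Perf(A)$. The main obstacle will be exactly this last point: showing that the condition characterizing a categorical resolution, namely that the adjunction morphisms restrict to isomorphisms on a designated subcategory such as $\Perf(X)$, survives when passing to $\Perf(X_{k'})$. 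This reduces to the compatibility between scalar extension and the formation of compact generators, which has to be handled carefully using the DG enhancement and the machinery of \cite{sosna2014scalar}.
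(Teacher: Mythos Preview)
Your overall structure --- check smoothness of $B_{k'}$, check $\rD(X_{k'})\simeq\rD(A_{k'})$, then verify the bimodule conditions --- matches the paper's. But there are two places where your argument diverges, one a genuine gap and one an overcomplication.

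\textbf{The gap in (i).} You write that ``the equivalence $\rD(X)\simeq\rD(A)$ is induced by a compact generator $E\in\rD(X)$ with $A\simeq R\Hom_X(E,E)$.'' This is not part of the hypothesis: Definition~\ref{defi: categorical resolution our form} only asks that $A$ be \emph{some} DG algebra with $\rD(X)\simeq\rD(A)$, and a priori such an equivalence need not be given by tensoring with a bimodule at all. To reduce to the case $A\simeq R\Hom_X(E,E)$ you need to know that any two such DG algebras are DG Morita equivalent, which is what the paper establishes in Proposition~\ref{prop: uniqueness of dga for enhancement of a variety} using the Lunts--Orlov uniqueness of DG enhancements (Theorem~\ref{thm: uniqueness of dg-enhancement}) together with To\"en's continuity argument. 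Only then does Proposition~\ref{prop: dg morita equivalence is preserved under scalar extension} let you conclude $\rD(X_{k'})\simeq\rD(A_{k'})$ for an arbitrary such $A$ (this is Corollary~\ref{coro: scalar extension of any dga of a variety give the base change}). Your appeal to \cite{sosna2014scalar} does not cover this step.

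\textbf{The overcomplication in (iii).} You frame the remaining task as controlling the adjunction morphisms on $\Perf(A_{k'})$ and worry about whether $\Perf(A_{k'})$ is generated by the image of $\Perf(A)$. This is unnecessary. The three conditions in Definition~\ref{defi: categorical resolution our form} are stated entirely in terms of the fixed objects $A$, $B$, $T$: that $H^i(A)\cong\Hom_{\rD(B)}(T,T[i])$, that $T\in\Perf(B)$, and that $\bigoplus_i\Hom_{\rD(B)}(T,B[i])$ is finite dimensional. Since $T$ is perfect over $B$, Lemma~\ref{lemma: scalar extension of morphisms in derived category} gives $\Hom_{\rD(B_{k'})}(T_{k'},(-)_{k'})\cong\Hom_{\rD(B)}(T,-)\otimes_k k'$, and this immediately yields conditions (1) and (3) for $T_{k'}$; condition (2) follows from Lemma~\ref{lemma: characterization of compact objects}. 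No statement about arbitrary perfect complexes over $A_{k'}$ is needed, and the ``main obstacle'' you identify does not arise.
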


As an application we study the categorical resolution of projective curves $X$ over a non-algebraically closed field $k$. Using the technique of scalar extension we obtain the following theorems which generalize the main results in \cite{wei2016full}.

\begin{thm}[See Theorem \ref{thm: full exceptional collection non-alg closed field} below]\label{thm: full exceptional collection non-alg closed field introduction}
Let $X$ be a projective curve over a  field $k$. Then $X$ has a categorical resolution which admits a full exceptional collection if and only if the geometric genus of $X$ is $0$.
\end{thm}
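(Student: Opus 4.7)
The plan is to prove the two implications separately, using Proposition~\ref{prop: scalar extension of a categorical resolution is a categocial resolution introduction} to reduce the non-trivial direction to the algebraically closed case already handled in \cite{wei2016full}.

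For the ``only if'' direction, I would begin with an algebraic categorical resolution $(A,B,T)$ of $X$ carrying a full exceptional collection $E_1, \dots, E_n$, and apply Proposition~\ref{prop: scalar extension of a categorical resolution is a categocial resolution introduction} to obtain an algebraic categorical resolution $(A_{\bar k}, B_{\bar k}, T_{\bar k})$ of $X_{\bar k}$. The next step is to verify that $E_1\otimes_k \bar k, \dots, E_n \otimes_k \bar k$ remains a full exceptional collection: this decomposes into showing that $\Hom$ of perfect DG-modules commutes with flat scalar extension (so that $\End^*(E_i)=k$ becomes $\bar k$ and $\Hom^*(E_j,E_i)=0$ for $j>i$ is preserved) and that scalar extension preserves classical generation. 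Finally, the main result of \cite{wei2016full} would force the geometric genus of $X_{\bar k}$, equivalently of $X$, to be zero.

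For the ``if'' direction, I would start from the normalization $\nu:\widetilde X\to X$, which is a smooth projective curve over $k$ whose base change $\widetilde X_{\bar k}$ is a disjoint union of copies of $\mathbb P^1_{\bar k}$. Using a Kuznetsov--Lunts style construction \cite{lunts2010categorical}, I would assemble a categorical resolution $\mathscr T$ of $X$ by gluing $\rD(\widetilde X)$ to a derived category of modules over a finite-dimensional algebra encoding the infinitesimal data at the singular locus. A full exceptional collection on $\mathscr T$ would then be produced in two pieces: on each component of $\widetilde X$ (a smooth genus-zero curve, possibly a non-split conic) one uses the standard length-two semiorthogonal decomposition, enlarging $\mathscr T$ if necessary by a piece equivalent to $\rD(C)$ for a central simple algebra $C$ representing the relevant Brauer class, so that the two objects become genuinely exceptional; the pieces coming from the singular locus contribute further exceptional objects via idempotent decompositions.

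The principal obstacle is the construction of the full exceptional collection in the ``if'' direction when $\widetilde X$ contains a non-split conic $Q$: since $\rD(Q)$ itself admits no full exceptional collection (the would-be second object has a non-commutative endomorphism algebra), one must enlarge $\mathscr T$ by an extra piece coming from the even Clifford algebra of $Q$, and then check both that this enlargement still satisfies the smoothness and adjunction axioms of Definition~\ref{defi: categorical resolution our form} and that the gluing with the singular-locus data preserves exceptionality. Once the scalar-extension technology is in hand, the ``only if'' direction should by contrast be essentially formal.
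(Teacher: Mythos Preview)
Your ``only if'' argument is essentially the paper's: base change the resolution to $\bar k$ via Proposition~\ref{prop: scalar extension of a categorical resolution is a categocial resolution introduction}, check that full exceptional collections survive scalar extension (this is isolated as Lemma~\ref{lemma: scalar extension and full exc collection} in the paper, proved exactly along the lines you sketch), and invoke \cite{wei2016full}. One bridge step is missing from your outline: the theorem is stated for categorical resolutions $(\mathscr T,\pi^*,\pi_*)$ in the sense of Definition~\ref{defi: categorical resolution}, not for algebraic ones, so you must first convert. The paper handles this by observing that a full exceptional collection in $\mathscr T^c$ yields a compact generator of $\mathscr T$, so that Proposition~\ref{prop: a categorical resolution gives a categorical resolution of dga} produces the required triple $(A,B,T)$; you should insert this step before invoking Proposition~\ref{prop: scalar extension of a categorical resolution is a categocial resolution introduction}.

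For the ``if'' direction the paper is much terser than you: it simply asserts that the explicit construction of \cite{wei2016full} Proposition~4.1 goes through verbatim over an arbitrary base field, and says nothing about non-split conics. Your worry that a component of the normalization $\widetilde X$ could be a Brauer--Severi curve with nontrivial class, so that $\rD(\widetilde X)$ itself carries no full exceptional collection in the strict sense, is a genuine subtlety that the paper does not engage with. Your proposed remedy (enlarging $\mathscr T$ by a summand equivalent to $\rD(C)$ for the relevant quaternion algebra) is a reasonable line of attack, but it is additional work beyond what the paper actually does; whether the paper's one-line appeal to \cite{wei2016full} is literally adequate depends on conventions and details not reproduced here.
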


\begin{thm}[See Theorem \ref{thm: tilting object non-alg closed field} below]\label{thm: tilting object non-alg closed field introduction}
Let $X$ be a projective curve with geometric genus $\geq 1$ over a  field $k$ and  $(\mathscr{T},\pi^*,\pi_*)$ be a categorical resolution of $X$. Then $\mathscr{T}^c$ cannot have a tilting object, moreover there cannot be a finite dimensional $k$-algebra $\Lambda$ of finite global dimension such that
$$
\mathscr{T}^c\simeq \rD^b(\Lambda\text{-mod}).
$$
\end{thm}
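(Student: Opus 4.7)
The plan is to reduce the theorem to the case of an algebraically closed base field, where the statement is provided by the main results of \cite{wei2016full}, and to effect the reduction via base change along $\bar k/k$ using Proposition \ref{prop: scalar extension of a categorical resolution is a categocial resolution introduction}.

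Concretely, first I would replace $(\mathscr{T},\pi^*,\pi_*)$ by an equivalent algebraic categorical resolution $(A,B,T)$ of $X$ by appealing to Proposition \ref{prop: a categorical resolution gives a categorical resolution of dga}. Proposition \ref{prop: scalar extension of a categorical resolution is a categocial resolution introduction} then yields an algebraic categorical resolution $(A_{\bar k},B_{\bar k},T_{\bar k})$ of $X_{\bar k}$; this is a projective curve over $\bar k$ whose geometric genus remains $\geq 1$, since the geometric genus is invariant under further base change to an algebraic closure. Denote the resulting triangulated category by $\mathscr{T}_{\bar k}$.

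Second I would establish a transfer principle asserting that a tilting object $E\in\mathscr{T}^c$, or a triangulated equivalence $\mathscr{T}^c\simeq\rD^b(\Lambda\text{-mod})$ with $\Lambda$ a finite dimensional $k$-algebra of finite global dimension, propagates under $-\otimes_k\bar k$ to the corresponding structure on $\mathscr{T}_{\bar k}^c$. For the tilting clause one uses that $\Hom(E_{\bar k},E_{\bar k}[i])=\Hom(E,E[i])\otimes_k\bar k=0$ for $i\neq 0$ together with the preservation of compact generation under the DG-module scalar extension used to construct $(A_{\bar k},B_{\bar k},T_{\bar k})$. For the derived equivalence clause, $\rD^b(\Lambda\text{-mod})\otimes_k\bar k\simeq \rD^b(\Lambda_{\bar k}\text{-mod})$ is a direct instance of the same DG scalar extension machinery, and $\Lambda_{\bar k}:=\Lambda\otimes_k\bar k$ is manifestly finite dimensional. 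The resulting tilting object (respectively equivalence) on $X_{\bar k}$ then contradicts \cite{wei2016full}, finishing the argument.

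The main obstacle I foresee is the behavior of finite global dimension under the extension $-\otimes_k\bar k$, which can fail when $\bar k/k$ is inseparable. The natural workaround is to perform the base change in two stages: first to the separable closure $k^{\mathrm{sep}}$, where finite global dimension is preserved and the contradiction with \cite{wei2016full} already applies whenever $X_{k^{\mathrm{sep}}}$ has geometric genus $\geq 1$, and only then to pass to $\bar k$ if one needs the genus hypothesis after the full algebraic closure. This issue is localized to the derived equivalence clause; the tilting clause does not involve global dimension and goes through cleanly in a single step.
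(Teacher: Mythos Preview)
Your overall strategy is exactly the one the paper intends: pass to an algebraic categorical resolution via Proposition \ref{prop: a categorical resolution gives a categorical resolution of dga}, base change along $\bar k/k$ using Proposition \ref{prop: scalar extension of a categorical resolution is a categocial resolution}, and contradict Theorem \ref{thm: tilting object alg closed field}. For the tilting clause this goes through cleanly (one should also note, as in the proof of Theorem \ref{thm: full exceptional collection non-alg closed field}, that the hypothesis supplies a compact generator of $\mathscr{T}$, which is what licenses the use of Proposition \ref{prop: a categorical resolution gives a categorical resolution of dga}).

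You are right to flag the global-dimension issue for the ``moreover'' clause: finite global dimension of $\Lambda$ need not survive an inseparable extension (e.g.\ $k=\mathbb{F}_p(t)$, $\Lambda=k[x]/(x^p-t)$ has global dimension $0$, but $\Lambda_{\bar k}\cong \bar k[\epsilon]/(\epsilon^p)$ has infinite global dimension). However, your proposed two-stage workaround does not close the gap: over $k^{\mathrm{sep}}$ you preserve finite global dimension but Theorem \ref{thm: tilting object alg closed field} is stated only over algebraically closed fields, so you cannot invoke it there; and passing further to $\bar k$ brings back the inseparability problem you were trying to avoid.

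The clean fix is to observe that the ``moreover'' clause already reduces to the tilting clause over $k$, with no base change needed for this reduction. If $\mathscr{T}^c\simeq \rD^b(\Lambda\text{-mod})$ with $\Lambda$ finite-dimensional of finite global dimension, then $\Lambda$ itself is a tilting object of $\mathscr{T}^c$: one has $\Hom(\Lambda,\Lambda[i])=\mathrm{Ext}^i_\Lambda(\Lambda,\Lambda)=0$ for $i\neq 0$, and finite global dimension gives $\rD^b(\Lambda\text{-mod})=\Perf(\Lambda)$, so $\Lambda$ generates. Now apply your (correct) argument for the tilting clause. With this adjustment your proof is complete and matches the paper's intended route.
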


This paper is organized as follows: In Section \ref{section: prelim} we quickly review  triangulated categories, DG categories and DG algebras. In Section \ref{section: Categorical resolution of singularities} we review and compare different definitions of categorical resolutions. In Section \ref{section: Scalar extensions of derived categories of DG algebras} we study the scalar extension of derived categories of DG algebras and in Section \ref{section: Scalar extensions of categorical resolutions} we study the scalar extension of categorical resolutions. In Section \ref{section: full exceptional collections of categorical resolutions} we use the technique of scalar extension to study categorical resolutions of projective curves over a non-algebraically closed field.

\section*{Acknowledgements}
First the author wants to thank Valery Lunts for introducing him to this topic and for helpful comments on the previous version of this paper. The author also wants to thank Pieter Belmans, Francesco Genovese, Adeel Khan, Jeremy Rickard, and Pedro Tamaroff for answering questions related to this paper. Part of this paper was finished when the author was visiting the Institut des Hautes \'{E}tudes Scientifiques, France and he wants to thank IHES for its hospitality and its ideal working environment.

\section{Preliminaries}\label{section: prelim}
\subsection{Review of  some concepts on triangulated categories}\label{section: review of triangulated categories}
Let $\mathcal{T}$ be a triangulated category. $\mathcal{T}$ is called \emph{cocomplete} if it has arbitrary direct sums. An object $E$ of a cocomplete triangulated category $\mathcal{T}$ is called \emph{compact} if the functor  $\Hom_{\mathcal{T}}(E,-)$ preserves arbitrary direct sums. Let $\mathcal{T}^c$ denote the full triangulated subcategory of $\mathcal{T}$ consisting of compact objects.

Let $\mathcal{I}$ be a set of objects of $\mathcal{T}$. We say $\mathcal{I}$ \emph{generates} $\mathcal{T}$ if for any object $N$ of $\mathcal{T}$, $\Hom_{\mathcal{T}}(E,N[i])=0$ for any $E\in \mathcal{I}$ and $i\in \mathbb{Z}$ implies $N=0$. We say a cocomplete triangulated category $\mathcal{T}$ is \emph{compactly generated} if it is generated by a set of compact objects. An object $E$ of $\mathcal{T}$ is called a generator of $\mathcal{T}$ if the set $\{E\}$ generates $\mathcal{T}$.

We have the following well-known result.

\begin{lemma}\label{lemma: the subcategory contains compact generator is the whole category}
Let $\mathcal{T}$ be a cocomplete triangulated category. If a set of objects $\mathcal{E}\subset \mathcal{T}^c$ generates $\mathcal{T}$, then $\mathcal{T}$ coincides with the smallest strictly full triangulated subcategory
of $\mathcal{T}$ which contains $\mathcal{E}$ and is closed under direct sums. Recall a subcategory is strictly full if it is full and closed under isomorphism.
\end{lemma}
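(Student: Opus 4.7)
The plan is to show both inclusions. Let $\mathcal{S}$ denote the smallest strictly full triangulated subcategory of $\mathcal{T}$ containing $\mathcal{E}$ and closed under arbitrary direct sums. The containment $\mathcal{S}\subseteq\mathcal{T}$ is automatic. For the converse I would take an arbitrary $X\in\mathcal{T}$ and realize it as a homotopy colimit of a tower inside $\mathcal{S}$.

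The construction is the classical cellular approximation. Start with $S_0:=\bigoplus_{f}E_f[i_f]$, indexed by all morphisms $f\colon E[i]\to X$ with $E\in\mathcal{E}$ and $i\in\mathbb{Z}$, together with the tautological $\varphi_0\colon S_0\to X$; by construction $S_0\in\mathcal{S}$. Given $\varphi_n\colon S_n\to X$ with cone $C_n$ in the triangle $S_n\to X\to C_n\xrightarrow{\delta_n} S_n[1]$, form $R_n:=\bigoplus_g E_g[i_g]$ indexed by all $g\colon E[i]\to C_n$ with its tautological map $\psi_n\colon R_n\to C_n$. The octahedral axiom applied to the composite $\delta_n\circ\psi_n\colon R_n\to S_n[1]$ produces $S_{n+1}\in\mathcal{S}$ fitting into a triangle $S_n\to S_{n+1}\to R_n\to S_n[1]$, an extension $\varphi_{n+1}\colon S_{n+1}\to X$ of $\varphi_n$, and a triangle $R_n\xrightarrow{\psi_n}C_n\to C_{n+1}\to R_n[1]$ expressing the new cone.

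Let $S_\infty$ be the homotopy colimit of the tower $(S_n)_{n\geq 0}$ defined by the usual telescope triangle; it stays in $\mathcal{S}$ by closure under direct sums and mapping cones. The $\varphi_n$ assemble into $\varphi_\infty\colon S_\infty\to X$ with cone $Z$, and by the generating hypothesis on $\mathcal{E}$ it suffices to show $\Hom_{\mathcal{T}}(E[i],Z)=0$ for every $E\in\mathcal{E}$ and $i\in\mathbb{Z}$, as this will give $Z=0$ and therefore $X\simeq S_\infty\in\mathcal{S}$.

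The main obstacle is precisely this final vanishing, and here compactness of each $E\in\mathcal{E}$ is decisive. Applying $\Hom_{\mathcal{T}}(E[i],-)$ to the telescope triangle and using that $\Hom_{\mathcal{T}}(E,-)$ commutes with the countable direct sums appearing in the telescope yields $\Hom_{\mathcal{T}}(E[i],S_\infty)\cong\varinjlim_n\Hom_{\mathcal{T}}(E[i],S_n)$. Tracing the cell attachments then shows that $\varinjlim_n\Hom_{\mathcal{T}}(E[i],S_n)\to\Hom_{\mathcal{T}}(E[i],X)$ is bijective in every degree: surjectivity is built into $S_0$ by definition, while injectivity follows because any $h\colon E[i]\to S_n$ with $\varphi_n\circ h=0$ factors through $C_n[-1]\to S_n$ via some $\tilde h\colon E[i+1]\to C_n$, and $\tilde h$ is by definition a summand of $\psi_n$, so $h$ factors through the connecting map $R_n[-1]\to S_n$ and is killed in $S_{n+1}$. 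The long exact sequence of the triangle $S_\infty\to X\to Z\to S_\infty[1]$ then forces $\Hom_{\mathcal{T}}(E[i],Z)=0$, completing the proof.
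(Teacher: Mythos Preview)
Your argument is correct and is precisely the standard cellular/telescope approximation that underlies the reference the paper defers to (\cite{rouquier2008dimensions}, Theorem~4.22); the paper gives no independent proof, so there is nothing further to compare.
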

\begin{proof}
See the proof of \cite{rouquier2008dimensions} Theorem 4.22.
\end{proof}

For a  scheme $X$, let $\rD(X)$ be the unbounded derived category of complexes of $\mathcal{O}_X$-modules with quasi-coherent cohomologies. Moreover if $X$ is noetherian let $\rD^{\text{perf}}(X)$ be the derived category of perfect complexes on $X$ and  $\rD^b(\coh(X))$ be the derived category of bounded complexes of $\mathcal{O}_X$-modules with coherent cohomologies.

\begin{rmk}\label{rmk: different D qcoh}
We could also consider $\rD(\Qcoh(X))$, the derived category of unbounded complexes of quasi-coherent sheaves on $X$. There exists a canonical triangulated functor $i: \rD(\Qcoh(X))\to \rD(X)$. For a general scheme $X$, the functor $i$ needs not to be an equivalence. However if $X$ is noetherian or quasi-compact and separated, then $i$ must be an triangulated equivalence, see \cite[Tag 09T1]{stacks-project} Proposition 35.7.3 and \cite{bokstedt1993homotopy} Corollary 5.5. Since all schemes we study in this paper are either noetherian or quasi-compact and separated, we could identify $\rD(\Qcoh(X))$ and $\rD(X)$ and we will restrict ourselves to $\rD(X)$ from now on.
\end{rmk}

We have the following result on compact objects and compact generators of $\rD(X)$.

\begin{prop}[\cite{bondal2003generators} Theorem 3.1.1]\label{prop: compact objects and compact generators in D(X)}
Let $X$ be a quasi-compact quasi-separated scheme. Then
\begin{enumerate}
\item The compact objects in $\rD(X)$ are precisely the perfect complexes;
\item $\rD(X)$ has a compact generator.
\end{enumerate}
\end{prop}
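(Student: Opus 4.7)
The plan is to handle the two statements in the reverse order from how they appear: first construct a compact object which is a perfect complex and which generates $\rD(X)$, and then use this generator to identify all compact objects with perfect complexes.

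For part (2), I would argue by induction on the minimal number of affine opens needed to cover $X$. The base case is $X=\Spec A$ affine, where the structure sheaf $\mathcal{O}_X$ (corresponding to the module $A$ under $\rD(X)\simeq \rD(A\text{-Mod})$) is manifestly a compact generator: it is perfect, and $\Hom_{\rD(X)}(\mathcal{O}_X,N[i])=H^i(X,N)$, whose vanishing for all $i$ forces $N=0$. For the inductive step, write $X = U \cup V$ where $V$ is affine and $U$ is a quasi-compact quasi-separated open with a compact generator $G_U\in \rD^{\perf}(U)$ by the inductive hypothesis. Let $Z = X \setminus U$; this is a closed subscheme contained in $V$. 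On the affine open $V$ the quasi-coherent ideal sheaf of $Z$ is generated by finitely many global sections $f_1,\ldots,f_n$, whose Koszul complex $K$ is a perfect complex on $V$ with support exactly $Z$. Because $K$ is acyclic on $V\setminus Z=U\cap V$, extension by zero yields a perfect complex $\widetilde K$ on $X$. Next, one invokes a Thomason--Trobaugh style extension argument to produce a perfect complex $\widetilde G$ on $X$ whose restriction to $U$ generates $\rD(U)$ (possibly only up to a direct summand, which we absorb by replacing $G_U$ accordingly). I then claim $\widetilde G \oplus \widetilde K$ is a compact generator of $\rD(X)$: if $N\in \rD(X)$ satisfies $\Hom(\widetilde G \oplus \widetilde K, N[i])=0$ for all $i$, then $N|_U=0$ using the first summand, hence $N$ is supported on $Z$; vanishing against the Koszul complex, which detects supports in $Z$, then forces $N=0$.

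For part (1), the implication ``perfect $\Rightarrow$ compact'' is local on $X$: for each affine chart $\Spec A\hookrightarrow X$ the perfect complex restricts to a bounded complex of finitely generated projective $A$-modules, and $\Hom_A(P,-)$ visibly commutes with direct sums for such $P$. One then assembles this into the global statement via a Mayer--Vietoris argument on a finite affine cover, using quasi-separatedness of $X$ to keep the intersections quasi-compact and hence keep the Mayer--Vietoris triangle valid. For the converse, observe that the compact generator $E:=\widetilde G \oplus \widetilde K$ constructed in (2) is perfect. By Lemma \ref{lemma: the subcategory contains compact generator is the whole category}, $\rD(X)$ coincides with the smallest strictly full triangulated subcategory containing $E$ and closed under direct sums. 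A standard consequence (Neeman's theorem identifying compact objects in a compactly generated triangulated category with the thick subcategory generated by any compact generator) then shows that every compact object of $\rD(X)$ lies in the thick subcategory of $\rD^{\perf}(X)$ generated by $E$. Since perfect complexes are closed under shifts, cones, and direct summands, every compact object is perfect.

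The main technical obstacle is the extension step in the inductive construction in (2): an arbitrary perfect complex on the open subscheme $U$ need not extend to a perfect complex on $X$. The resolution of this, due to Thomason--Trobaugh, is that any compact object of $\rD(U)$ extends to a perfect complex on $X$ \emph{up to a direct summand}, and the obstruction can be absorbed by adding a perfect complex supported on the closed complement $Z$ — which is exactly the role played by the Koszul complex on $V$. Verifying that the resulting object is simultaneously perfect, compact, and a generator requires the most care, but is essentially the combinatorial content of Bondal--van den Bergh's theorem.
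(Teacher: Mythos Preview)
Your sketch is correct and follows the standard Thomason--Trobaugh / Bondal--van den Bergh argument: Noetherian induction on an affine cover, Koszul complexes to detect support on the closed complement, the Thomason--Trobaugh extension-up-to-summand trick to lift a compact generator from an open, and Neeman's characterization of compact objects as the thick closure of a compact generator. The only point to be slightly careful about is the sentence ``extension by zero yields a perfect complex $\widetilde K$ on $X$'': one should phrase this as pushing $K$ forward along the open immersion $V\hookrightarrow X$ after observing that $K$ is supported on the closed set $Z\subset V$, so that the pushforward remains perfect; ``extension by zero'' is the right intuition but not literally a functor on $\rD(V)$.

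As for the comparison with the paper: the paper does not give an independent argument at all --- its proof consists solely of the reference ``See the proof of \cite{bondal2003generators} Theorem 3.1.1.'' So you have supplied substantially more than the paper does, and what you supplied is precisely (a correct outline of) the content of the cited reference.
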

\begin{proof}
See the proof of \cite{bondal2003generators} Theorem 3.1.1.
\end{proof}

\subsection{Review of some concepts on DG categories}
Most of results in this subsection could be found in \cite{kuznetsov2014categorical} Section 3. For a comprehensive introduction to DG categories see \cite{keller2006differential}.

Let $\mathcal{D}$ be a DG category over $k$ and let $H^0(\mathcal{D})$ denote the homotopy category of $\mathcal{D}$. Let $Z^0(\mathcal{D})$ be the category which has the same objects as $\mathcal{D}$ and whose morphisms from an object $x$ to an object $y$ are the degree $0$ closed morphisms in $\mathcal{D}(x, y)$.

A right $\mathcal{D}$-module is a DG functor $\mathcal{D}^{op}\to \Ch(k)$. The category of right $\mathcal{D}$-modules has a natural DG structure and let $\mathcal{M}_{dg}(\mathcal{D})$ denote the DG category of right $\mathcal{D}$-modules. Moreover, a DG-modules $M$ over $\mathcal{D}$ is called acyclic if for any object $X\in \mathcal{D}$, the complex $M(X)$ is acyclic. Let $\text{Acycl}(\mathcal{D})$ denote the full DG subcategory of $\mathcal{M}_{dg}(\mathcal{D})$ which consists of acyclic DG-modules over $\mathcal{D}$.

For a DG category $\mathcal{D}$, the derived category of DG-modules over $\mathcal{D}$ is defined to be the Verdier quotient
$$
\rD(\mathcal{D}):=H^0(\mathcal{M}_{dg}(\mathcal{D}))/H^0(\text{Acycl}(\mathcal{D})).
$$

\begin{rmk}
We can prove that $\rD(\mathcal{D})$ is a cocomplete, compactly generated triangulated category.
\end{rmk}

\begin{defi}\label{defi: h-injective projective DG modules}
A DG-module $P$ is called \emph{h-projective} if, for any acyclic DG-module $M$, the complex $\Hom_{\mathcal{M}_{dg}(\mathcal{D})}(P,M)$ is acyclic. Dually, a DG-module $I$ is called \emph{h-injective} if, for any acyclic DG-module $M$, the complex $\Hom_{\mathcal{M}_{dg}(\mathcal{D})}(M,I)$ is acyclic.

We denoted the full DG subcategory of $\mathcal{M}_{dg}(\mathcal{D})$ consisting of h-projective DG-modules by h-proj$(\mathcal{D})$ and the full DG subcategory of $\mathcal{M}_{dg}(\mathcal{D})$ consisting of h-injective DG-modules by h-inj$(\mathcal{D})$.
\end{defi}

\begin{rmk}
We can prove that $\rD(\mathcal{D})\simeq H^0(\text{h-proj}(\mathcal{D}))\simeq H^0(\text{h-inj}(\mathcal{D}))$.
\end{rmk}

There is a standard Yoneda embedding $\mathcal{D}\to \mathcal{M}_{dg}(\mathcal{D})$ given by
$$
x\mapsto h_x:=\mathcal{D}(-,x).
$$

\begin{defi}\label{defi: representable DG modules}
A DG-module $M$ is called \emph{representable} if  it is isomorphic in  $Z^0(\mathcal{M}_{dg}(\mathcal{D}))$ to an object of the form $h_x$ for some $x\in \mathcal{D}$.

Moreover, a DG-module $M$ is called \emph{quasi-representable} if  it is isomorphic in  $\rD(\mathcal{D})$ to an object of the form $h_x$ for some $x\in \mathcal{D}$.
\end{defi}

For two DG categories $\mathcal{C}$ and $\mathcal{D}$, a $\mathcal{C}$-$\mathcal{D}$ DG-bimodule is a right DG module over $\mathcal{D}^{op}\otimes \mathcal{C}$, i.e. a DG functor $\mathcal{D}\otimes \mathcal{C}^{op}\to \Ch(k)$. For a $\mathcal{C}$-$\mathcal{D}$ DG-bimodule $T$, the derived tensor product defines a functor
$$
(-)\otimes^{\mathbf{L}}_{\mathcal{C}}T: \rD(\mathcal{C})\to \rD(\mathcal{D}).
$$

\begin{defi}\label{defi: quasi-functor}
A $\mathcal{C}$-$\mathcal{D}$ DG-bimodule $T$ is called a \emph{quasi-functor} if for any $x\in \mathcal{C}$, the object $T(x,-)\in \mathcal{M}_{dg}(\mathcal{D})$ is quasi-representable. It is clear that a quasi-functor defines a functor $H^0(\mathcal{C})\to H^0(\mathcal{D})$.
\end{defi}

For a DG category $\mathcal{D}$ we could consider $\mathcal{D}$-$\mathcal{D}$ DG-bimodules and in particular $\mathcal{D}$ itself could be considered as the diagonal bimodule
$$
\mathcal{D}(X,Y)=\Hom_{\mathcal{D}}(Y,X)\in \Ch(k).
$$

\begin{defi}\label{defi: smooth DG cat and smooth triangulated cat}
A DG category $\mathcal{D}$ is called \emph{smooth}  if the diagonal
bimodule  $\mathcal{D}$ is a perfect bimodule. In other words, if  $\mathcal{D}$ is a direct summand (in the derived category of $\mathcal{D}$-$\mathcal{D}$ DG-bimodules)
of a bimodule obtained from quasi-representable bimodules by finite number of shifts and
cones of closed morphisms.

Moreover, a triangulated category $\mathcal{T}$ is called smooth if there exists a smooth DG category $\mathcal{D}$ such that $\mathcal{T}$ is triangulated equivalent to $\rD(\mathcal{D})$.
\end{defi}

\begin{rmk}\label{rmk: smooth variety and smooth triangulated category}
It is well known  that if $X$ is a smooth variety, then the derived category
$\rD(X)$ is a smooth triangulated category, see \cite{toen2007moduli}.
\end{rmk}

\subsection{Review of some concepts on DG algebras}
Let $A$ be a DG algebra over $k$. We could consider $A$ as a DG category with one  object. Therefore most of  concepts for DG categories could be defined for DG algebras without any changes. For example, we could define right DG-modules, the derived category, h-projective and h-injective modules and smoothness for a DG algebra.

For a DG algebra $A$, let $\rD(A)$ be the unbounded derived category of complexes of  right DG A-modules and Perf$(A)$ be the full subcategory of perfect complexes of  right $A$-modules.

For later application we recall the following characterization of compact objects in $\rD(A)$.
\begin{lemma}\label{lemma: characterization of compact objects}[\cite[Tag 09QZ]{stacks-project} Proposition 22.27.4]
Let $A$ be a DG algebra. Let $E$ be an
object of $\rD(A)$. Then the following are equivalent
\begin{enumerate}
\item $E$ is a compact object;
\item $E$ is a direct summand of an object of $\rD(A)$
which is represented by a differential graded module $P$ which
has a finite filtration $F_\bullet$ by differential graded submodules
such that $F_iP/F_{i - 1}P$ are finite direct sums of shifts of $A$, i.e. $E$ is an object of Perf$(A)$.
\end{enumerate}
\end{lemma}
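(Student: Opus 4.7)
The plan is to prove the two implications separately. The direction $(2)\Rightarrow(1)$ is a closure argument, and $(1)\Rightarrow(2)$ is the substantive content, requiring a Neeman--Ravenel style thick-subcategory theorem.

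For $(2)\Rightarrow(1)$, first observe that the free module $A$ is compact in $\rD(A)$, since $\Hom_{\rD(A)}(A,M)=H^0(M)$ and the cohomology functor commutes with arbitrary direct sums of DG-modules. Compactness is manifestly preserved under shifts, finite direct sums, passage to direct summands, and formation of cones (if $E\to F\to G\to E[1]$ is a distinguished triangle and $E,F$ are compact, then so is $G$, by the five-lemma applied to the long exact sequence obtained by applying $\Hom(-,\bigoplus M_\alpha)$). Given a DG-module $P$ with a finite filtration $F_\bullet$ whose subquotients are finite direct sums of shifts of $A$, an induction on the length of the filtration, using the distinguished triangles $F_{i-1}P\to F_iP\to F_iP/F_{i-1}P$, shows that $P$ is compact; hence any direct summand of $P$ in $\rD(A)$ is compact.

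For $(1)\Rightarrow(2)$, the key point is that $A$ is a compact generator of $\rD(A)$: it is compact by the previous paragraph, and it generates because if $\Hom_{\rD(A)}(A,M[i])=H^i(M)=0$ for all $i$ then $M\simeq 0$. Let $\mathcal{E}\subset \rD(A)$ be the smallest strictly full triangulated subcategory containing $A$ and closed under direct summands (the ``thick subcategory generated by $A$''). Unwinding the definition, every object of $\mathcal{E}$ is a direct summand of a finite iterated extension of shifts of $A$, i.e.\ admits a finite filtration with subquotients finite direct sums of shifts of $A$. The Neeman--Ravenel theorem (as proved in \cite{bondal2003generators} or the version given in \cite[Tag 09QZ]{stacks-project}) says that in any compactly generated triangulated category, the subcategory of compact objects coincides with the thick subcategory generated by any chosen set of compact generators. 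Applying this to $\rD(A)$ with generator $A$ yields $\rD(A)^c=\mathcal{E}$, which is exactly the description in $(2)$.

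The main obstacle is the last step, namely the identification $\rD(A)^c=\mathcal{E}$. The standard argument proceeds by applying Brown representability and a Bousfield localization: one considers the localizing subcategory $\langle A\rangle^{\mathrm{loc}}$ generated by $A$ under shifts, cones and \emph{arbitrary} direct sums; since $A$ generates, $\langle A\rangle^{\mathrm{loc}}=\rD(A)$ (by Lemma \ref{lemma: the subcategory contains compact generator is the whole category}), and then one shows by the Neeman compact-object argument that any compact object of a localizing subcategory generated by compact objects lies in the thick subcategory they generate. This is where the real work sits; everything else is formal.
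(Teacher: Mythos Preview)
Your sketch is correct and follows the standard Neeman--Ravenel argument; the paper does not supply an independent proof but simply cites \cite[Tag 09QZ]{stacks-project}, whose argument is precisely the one you outline. There is nothing to add.
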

\begin{proof}
See the proof of \cite[Tag 09QZ]{stacks-project} Proposition 22.27.4.
\end{proof}

The following definition plays a significant role in the constructions in this paper.
\begin{defi}\label{defi: dg morita equivalence}
Let $A$ and $B$ be two DG algebras over $k$. Then we call $A$ and $B$ are DG Morita equivalent if there exists an $A$-$B$ bimodule $T$ such that the derived tensor product functor $(-)\otimes^{\mathbf{L}}_A T$ induces a triangulated equivalence $\rD(A)\overset{\sim}{\to}\rD(B)$.
\end{defi}

\begin{rmk}
In a recent preprint \cite{rizzardo2017note}, Rizzardo and Van den Bergh constructed two $A_{\infty}$-algebras $F$ and $F_{\eta}$ over a field $k$ such that $\Perf(F)$ and $\Perf(F_{\eta})$ are triangulated equivalent but their $A_{\infty}$-enhancements are not $A_{\infty}$-equivalent. It could be deduced from this fact that there exist two DG algebras $A$ and $B$ over $k$ such that $\rD(A)\simeq \rD(B)$ but $A$ and $B$ are not DG Morita equivalent.
\end{rmk}

The following proposition is used to connect a DG category to the derived category of a DG algebra.

\begin{prop}[\cite{lunts2016new} Proposition B.1 (b)]\label{prop: equivalence between dg category and module cateogy of dg algebra}
The DG categories are over an arbitrary ground ring. Let $\mathcal{C}$ be a DG category with a full pretriangulated DG subcategory $\mathcal{I}$. Let $z: P\to I$ be a closed degree zero morphism in $\mathcal{C}$ with $I\in \mathcal{I}$ such that $z^*: \Hom_{\mathcal{C}}(I,J)\to \Hom_{\mathcal{C}}(P,J)$ is an quasi-isomorphism for all $J\in \mathcal{I}$, and let $B$ be a DG algebra together with a morphism $\beta: B\to \End_{\mathcal{C}}(P)$ of DG algebras such that the composition $B\overset{\beta}{\to}\End_{\mathcal{C}}(P)\overset{z_{*}}{\to}\Hom_{\mathcal{C}}(P,I)$ is a quasi-isomorphism. If $\mathcal{I}$ has all coproducts and $I$ is a compact generator of $H^0(\mathcal{I})$, then the functor
$$
\beta^*\circ \Hom(P,-): H^0(\mathcal{I})\to \rD(B)
$$
is an equivalence of triangulated categories.
\end{prop}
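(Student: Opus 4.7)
The plan is to apply the standard "compact generator" recognition principle: a coproduct-preserving triangulated functor between compactly generated triangulated categories is an equivalence as soon as it sends a compact generator to a compact generator and induces an isomorphism on $\Hom$-groups between all shifts of that generator. In our setting the source $H^0(\mathcal{I})$ is cocomplete (since $\mathcal{I}$ has all coproducts and is pretriangulated) and compactly generated by $I$ by hypothesis, while the target $\rD(B)$ is cocomplete and compactly generated by $B$ itself. The candidate functor is $F=\beta^{\ast}\circ\Hom_{\mathcal{C}}(P,-)$; since $\mathcal{I}$ is pretriangulated and $\Hom_{\mathcal{C}}(P,-)$ is a DG functor, it descends to a triangulated functor $H^0(\mathcal{I})\to H^0(\mathcal{M}_{dg}(\End_{\mathcal{C}}(P)))\to\rD(\End_{\mathcal{C}}(P))$, and restriction of scalars along $\beta$ then yields a triangulated functor to $\rD(B)$.

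First I would verify that $F$ preserves arbitrary coproducts. The crucial input is the assumption that $z^{\ast}:\Hom_{\mathcal{C}}(I,J)\to\Hom_{\mathcal{C}}(P,J)$ is a quasi-isomorphism for every $J\in\mathcal{I}$: this identifies $\Hom_{\mathcal{C}}(P,-)$ with $\Hom_{\mathcal{C}}(I,-)$ at the level of $\rD(B)$, and coproduct preservation then follows from the compactness of $I$ in $H^0(\mathcal{I})$. Next, applying $F$ to $I$ produces $\Hom_{\mathcal{C}}(P,I)$ regarded as a right $B$-module via $\beta$. The composite $B\to\End_{\mathcal{C}}(P)\to\Hom_{\mathcal{C}}(P,I)$ is a quasi-isomorphism of right $B$-modules (where $B$ acts on itself by right multiplication and on $\Hom_{\mathcal{C}}(P,I)$ by precomposition through $\beta$), so $F(I)\simeq B$ in $\rD(B)$; in particular $F$ carries the compact generator $I$ to the compact generator $B$.

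For full faithfulness on shifts of $I$, I must show that for every $n\in\mathbb{Z}$ the canonical map
\[
\Hom_{H^0(\mathcal{I})}(I,I[n])\longrightarrow\Hom_{\rD(B)}\bigl(F(I),F(I)[n]\bigr)
\]
is an isomorphism. The left side computes as $H^n\Hom_{\mathcal{C}}(I,I)$ and the right side as $H^n(B)$, and the two quasi-isomorphisms $z^{\ast}:\Hom_{\mathcal{C}}(I,I)\to\Hom_{\mathcal{C}}(P,I)$ and $B\to\Hom_{\mathcal{C}}(P,I)$ fit into a commutative triangle with $\Hom_{\mathcal{C}}(P,I)$ in the middle that realizes the comparison map as a zig-zag of quasi-isomorphisms. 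Having full faithfulness on the compact generator, a standard dévissage along cones and summands extends it to the thick subcategory generated by $I$, which equals the subcategory of compact objects of $H^0(\mathcal{I})$ by Lemma \ref{lemma: the subcategory contains compact generator is the whole category}; coproduct preservation together with compactness of $I$ and $B$ then extends it to all of $H^0(\mathcal{I})$. Essential surjectivity follows from Lemma \ref{lemma: the subcategory contains compact generator is the whole category} applied to the essential image of $F$, which is closed under coproducts and contains the compact generator $B$.

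The main technical obstacle, and what ties the two running hypotheses together, is the bookkeeping of module structures: one must verify that the quasi-isomorphism $B\to\Hom_{\mathcal{C}}(P,I)$ coming from the composite through $\beta$ and the quasi-isomorphism $z^{\ast}:\Hom_{\mathcal{C}}(I,I)\to\Hom_{\mathcal{C}}(P,I)$ are compatible not merely as chain complexes but as right $B$-modules, so that the identification $F(I)\simeq B$ in $\rD(B)$ respects the $B$-action induced by $\beta$. Once this compatibility is established, everything else is a direct application of the compact-generator recognition principle combined with Lemma \ref{lemma: characterization of compact objects} characterizing the compact objects of $\rD(B)$.
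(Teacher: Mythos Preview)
The paper does not give its own proof of this proposition; it simply refers the reader to \cite{lunts2016new} Proposition B.1 (b). Your proposal is a correct and complete outline of the standard argument that one finds there: verify coproduct preservation via the quasi-isomorphism $z^{\ast}$ together with compactness of $I$, identify $F(I)\simeq B$ using the hypothesis on $z_{\ast}\circ\beta$, check full faithfulness on shifts of the generator, and propagate by d\'evissage and Lemma \ref{lemma: the subcategory contains compact generator is the whole category}. The compatibility of $B$-module structures that you flag is indeed the only point requiring care, and it goes through because the map $b\mapsto z\circ\beta(b)$ is right $B$-linear for the precomposition action through $\beta$; after that, the functorial comparison map on $\Hom(I,I[n])$ is identified with $z^{\ast}$ on cohomology, which is an isomorphism by hypothesis.
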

\begin{proof}
See the proof of  \cite{lunts2016new} Proposition B.1 (b).
\end{proof}
\section{Categorical resolution of singularities}\label{section: Categorical resolution of singularities}
\subsection{Review of definitions of categorical resolutions}\label{section: Review of definitions}
In \cite{kuznetsov2008lefschetz} and \cite{kuznetsov2014categorical} the  \emph{categorical resolution of singularities} has been defined and studied.

\begin{defi}[\cite{kuznetsov2008lefschetz} Definition 3.2 or \cite{kuznetsov2014categorical} Definition 1.3]\label{defi: categorical resolution}
A categorical resolution of a scheme $X$ is a smooth, cocomplete,
compactly generated, triangulated category $\mathscr{T}$ with an adjoint pair of triangulated
functors
$$
\pi^*: \rD(X)\to \mathscr{T} \text{ and } \pi_*: \mathscr{T}\to \rD(X)
$$
such that
\begin{enumerate}
\item $id\simeq \pi_*\circ \pi^*$;
\item both $\pi_*$ and $\pi^*$ commute with arbitrary direct sums;
\item $\pi_*(\mathscr{T}^c)\subset \rD^b(\coh(X))$ where $\mathscr{T}^c$ denotes the full subcategory of $\mathscr{T}$ which consists of compact objects.
\end{enumerate}
\end{defi}

\begin{rmk}
Note that Condition (1) implies that $\pi^*$ is fully faithful and Condition (2) implies that $\pi^*(\rD^{\text{perf}}(X))\subset \mathscr{T}^c$.
\end{rmk}

\begin{rmk}\label{rmk: coherent is dual to perfect}
Let $X$ be a projective variety over an algebraically closed field $k$. By \cite{rouquier2008dimensions} Corollary 7.51, an object $\mathcal{F}$ belongs to the subcategory $\rD^b(\coh(X))$ if and only if
$$
\bigoplus_{n\in \mathbb{Z}}\Hom_{\rD(X)}(\mathcal{E},\mathcal{F}[n])
$$
 is finite dimensional for any $\mathcal{E}\in \rD^{\text{perf}}(X)$. Therefore Condition (3) in Definition \ref{defi: categorical resolution} is equivalent to the following condition: For any $\mathcal{E}\in \rD^{\text{perf}}(X)$ and $\mathcal{F}\in \mathscr{T}^c$, the vector space $\bigoplus_n\Hom_{\mathscr{T}}(\mathcal{E},\pi_*\mathcal{F}[n])$ is finite dimensional. For later applications we also notice that it is equivalent to require $\bigoplus_n\Hom_{\mathscr{T}}(\pi^*\mathcal{E},\mathcal{F}[n])$ to be finite dimensional.
\end{rmk}

In \cite{kuznetsov2014categorical} the existence of categorical resolutions has been proved.

\begin{thm} [\cite{kuznetsov2014categorical} Theorem 1.4]
Any separated scheme $Y$ of finite type over a field of characteristic $0$ has
a categorical resolution.
\end{thm}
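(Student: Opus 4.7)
The plan is to follow Kuznetsov's strategy, which combines resolution of singularities with an inductive gluing construction. First I would reduce to the case of an irreducible scheme (handling components and their intersections separately and gluing at the end), and then apply Hironaka's theorem to obtain a projective birational morphism $\sigma: \widetilde{Y} \to Y$ with $\widetilde{Y}$ smooth, such that $\sigma$ is an isomorphism outside a proper closed subscheme $Z \subsetneq Y$ of strictly smaller dimension, with exceptional divisor $E = \sigma^{-1}(Z)$. This sets up an induction on $\dim Y$; the base case is either $\dim Y = 0$ or $Y$ already smooth, for which $(\rD(Y), \mathrm{id}, \mathrm{id})$ is a categorical resolution.

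Next, I would handle the \emph{rational singularities} subcase as a warm-up: if the natural map $\mathcal{O}_Y \to R\sigma_*\mathcal{O}_{\widetilde{Y}}$ is an isomorphism, then by the projection formula $R\sigma_* L\sigma^* \simeq \mathrm{id}$, so the triple $(\rD(\widetilde{Y}), L\sigma^*, R\sigma_*)$ already satisfies Definition~\ref{defi: categorical resolution}: smoothness is Remark~\ref{rmk: smooth variety and smooth triangulated category}, the adjunction and commutation with direct sums come from $\sigma$ being proper of finite Tor-dimension, and condition~(3) holds because $R\sigma_*$ preserves coherence and boundedness.

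The substantive case is that of \emph{irrational} singularities, where $R\sigma_*\mathcal{O}_{\widetilde{Y}}$ has extra cohomology. Here I would apply the induction hypothesis to obtain a categorical resolution $\mathscr{T}_Z$ of $Z$, and glue it to $\rD(\widetilde{Y})$ along a quasi-functor built from the formal neighborhood of $E$ in $\widetilde{Y}$ (for instance, via pullback to $E$ composed with the proper pushforward $E \to Z$). Concretely, $\mathscr{T}$ is defined as a semi-orthogonal gluing of $\mathscr{T}_Z$ with an appropriate admissible subcategory of $\rD(\widetilde{Y})$, chosen so that the unwanted higher direct images $R^i\sigma_* \mathcal{O}_{\widetilde{Y}}$ are precisely cancelled by contributions from $\mathscr{T}_Z$. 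The new functors are $\pi^* := \iota \circ L\sigma^*$ (up to a correction from $\mathscr{T}_Z$) and $\pi_*$ its adjoint, designed so that $\pi_*\pi^* \simeq \mathrm{id}$ on $\rD(Y)$.

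The main obstacle is verifying that the glued category $\mathscr{T}$ is genuinely smooth and that condition~(3) holds. Smoothness requires realizing $\mathscr{T}$ as $\rD$ of a smooth DG category, which forces one to work with carefully chosen DG enhancements on both sides of the gluing and to prove that the gluing bimodule is itself perfect as a bimodule; this is the most delicate part of the argument. Condition~(3) requires that the gluing bimodule preserve coherence and boundedness, which follows from the properness of $E \to Z$ and the inductive hypothesis that $\mathscr{T}_Z$ itself satisfies~(3). The characteristic zero hypothesis enters exactly through the invocation of Hironaka in the inductive step; everything else is formal once the gluing data is in place.
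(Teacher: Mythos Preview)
The paper does not prove this theorem at all: it merely quotes it as \cite{kuznetsov2014categorical} Theorem~1.4 and moves on. There is no ``paper's own proof'' to compare against; the result is imported wholesale from Kuznetsov--Lunts.

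Your outline is a reasonable high-level sketch of the Kuznetsov--Lunts strategy, but since the present paper offers nothing beyond the citation, any comparison is with the cited source rather than with this paper. For what it is worth, your sketch diverges from the actual Kuznetsov--Lunts argument in a few places: they do not first reduce to irreducible schemes, and more importantly they must handle \emph{non-reduced} schemes, which is where much of the real work lies (the reduced case with rational singularities is, as you note, essentially the projection formula). Their induction is not simply on $\dim Y$ but on a more refined invariant involving the nonrational locus, and the gluing is carried out through a chain of infinitesimal thickenings of the exceptional locus rather than a single semi-orthogonal step. Your description of the smoothness verification as ``the most delicate part'' is accurate, but the mechanism you describe (cancelling higher direct images by contributions from $\mathscr{T}_Z$) is more impressionistic than what actually happens. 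None of this matters for reviewing the present paper, though, since the paper claims no originality here.
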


\begin{rmk}
In general, the categorical resolution of a scheme is not unique.
\end{rmk}

On the other hand, we notice that in \cite{lunts2010categorical} there is another definition of categorical resolution.

\begin{defi}[\cite{lunts2010categorical} Definition 4.1]\label{defi: categorical resolution dga}
Let $A$ be a DG algebra, A categorical resolution of $\rD(A)$ is a pair $(B,T)$ where $B$ is a smooth DG algebra and $T\in \rD(A^{op}\otimes B)$  such that the restriction of the functor
$$
\theta(-):=(-)\otimes^{\mathbf{L}}_A T;~ \rD(A)\to \rD(B)
$$
to the subcategory Perf$(A)$ is full and faithful.
\end{defi}

\begin{rmk}
It is clear that $\theta=(-)\otimes^{\mathbf{L}}_A T$ commutes with arbitrary direct sum. Moreover its right adjoint $R\Hom_B(T,-)$ commutes with arbitrary direct sum if and only if $T$ is compact when considered as an object in $\rD(B)$. In this case it is also clear that $\theta$ maps Perf$(A)$ to Perf$(B)$ and hence by  Lemma \ref{lemma: full faithfull from Perf to whole} below, $\theta: \rD(A)\to\rD(B)$ is fully faithful.
\end{rmk}

\begin{lemma}\label{lemma: full faithfull from Perf to whole}
Let $A$ and $B$ be DG algebras and let $F: \rD(A)\to \rD(B)$ be a triangulated functor with the following
properties
\begin{enumerate}
\item $F(\text{Perf}(A))\subset \text{Perf}(B)$;
\item The restriction of $F$ to Perf$(A)$ is fully faithful;
\item $F$ preserves direct sums.
\end{enumerate}
Then $F$ is fully faithful.
\end{lemma}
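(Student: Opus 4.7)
The plan is to reduce the fully faithfulness of $F$ from the hypothesis on $\text{Perf}(A)$ by a two-step d\'evissage argument, using that $A$ is a compact generator of $\rD(A)$ (so by Proposition-style facts in Section 2, $\text{Perf}(A)=\rD(A)^c$ generates $\rD(A)$), and then applying Lemma \ref{lemma: the subcategory contains compact generator is the whole category} twice. At each step the goal is to show that the canonical natural map
\[
F_{M,N}\colon \Hom_{\rD(A)}(M,N)\longrightarrow \Hom_{\rD(B)}(F(M),F(N))
\]
is an isomorphism, by checking that the class of objects for which this holds is a triangulated subcategory closed under arbitrary direct sums and containing $\text{Perf}(A)$.

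First I would fix a compact object $P\in\text{Perf}(A)$ and let $\mathcal{S}_P\subset\rD(A)$ be the full subcategory of those $N$ such that $F_{P[n],N}$ is an isomorphism for every $n\in\mathbb{Z}$. By hypothesis (2), $\mathcal{S}_P$ contains $\text{Perf}(A)$. It is clearly closed under shifts and, by the five-lemma applied to the long exact sequences of $\Hom$, closed under cones of morphisms, so it is triangulated. The key point is closure under arbitrary direct sums: because $P$ is compact in $\rD(A)$ and $F(P)\in\text{Perf}(B)$ is compact in $\rD(B)$ by Lemma \ref{lemma: characterization of compact objects} together with hypothesis (1), both sides of $F_{P,-}$ commute with direct sums, and hypothesis (3) ensures that $F$ also commutes with direct sums inside the right-hand $\Hom$. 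Thus $\mathcal{S}_P$ is a strictly full triangulated subcategory closed under direct sums containing the compact generator $A\in\text{Perf}(A)$, so by Lemma \ref{lemma: the subcategory contains compact generator is the whole category} we conclude $\mathcal{S}_P=\rD(A)$.

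Next I fix $N\in\rD(A)$ arbitrary and let $\mathcal{T}_N\subset\rD(A)$ be the full subcategory of those $M$ such that $F_{M,N}$ is an isomorphism. By the first step, $\mathcal{T}_N$ contains $\text{Perf}(A)$ (even with all shifts). Again $\mathcal{T}_N$ is triangulated by the five-lemma. For closure under direct sums, I note that
\[
\Hom_{\rD(A)}\Bigl(\bigoplus_i M_i,N\Bigr)\cong \prod_i \Hom_{\rD(A)}(M_i,N)
\]
and, using hypothesis (3),
\[
\Hom_{\rD(B)}\Bigl(F\bigl(\bigoplus_i M_i\bigr),F(N)\Bigr)\cong \prod_i \Hom_{\rD(B)}(F(M_i),F(N));
\]
a product of isomorphisms is an isomorphism, so $\mathcal{T}_N$ is closed under direct sums. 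Another application of Lemma \ref{lemma: the subcategory contains compact generator is the whole category} gives $\mathcal{T}_N=\rD(A)$, and since $N$ was arbitrary, $F$ is fully faithful.

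I do not expect a serious obstacle here; the argument is a standard Neeman/Keller-style d\'evissage. The only subtle point is ensuring that the first-variable object is compact on both sides of $F$ so that $\Hom(P,-)$ and $\Hom(F(P),-)$ commute with direct sums, which is precisely why hypothesis (1) is needed together with Lemma \ref{lemma: characterization of compact objects}. Everything else — closure under shifts and cones via the five-lemma, turning coproducts into products — is formal.
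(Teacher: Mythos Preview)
Your proof is correct and is precisely the standard d\'evissage argument that the paper defers to (\cite{lunts2010categorical} Lemma 2.13); in fact the paper reproduces exactly this two-step argument in its proof of Proposition \ref{prop: another DG algebra with the same derived category and a bimodule}. There is nothing to add.
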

\begin{proof}
See the proof of  \cite{lunts2010categorical} Lemma 2.13.
\end{proof}

\subsection{Comparison of definitions}\label{section: Comparison of definitions}
Conceptually Definition \ref{defi: categorical resolution} and Definition \ref{defi: categorical resolution dga} are very similar and we want to find the relation between them.

First of all, for a quasi-compact and quasi-separated scheme $X$, the derived category $\rD(X)$ has a compact generator $\mathcal{E}$ hence we have an equivalence of triangulated categories.
$$
\Phi: \rD(X)\overset{\sim}{\to} \rD(A)
$$
where $A:=R\text{Hom}_X(\mathcal{E},\mathcal{E})$. We notice that $\Phi$ maps $\rD^{\text{perf}}(X)$ to Perf$(A)$.

\begin{rmk}\label{rmk: compactly generated category does not have a compact generator}
We know that in Definition \ref{defi: categorical resolution} the triangulated category $\mathscr{T}$ is compactly generated. However, in general a triangulated category $\mathscr{T}$ is compactly generated does not imply that it has a single compact generator. For counterexamples see \cite{hall2015algebraic} Theorem A (2)(a).
\end{rmk}

Now we assume the triangulated category $\mathscr{T}$ in Definition \ref{defi: categorical resolution} has a compact generator, then we also have a DG algebra $B$ and an equivalence
$$
\Psi: \mathscr{T} \overset{\sim}{\to} \rD(B)
$$
which maps $\mathscr{T}^c$ to Perf$(B)$.

As a result we could reformulate Definition \ref{defi: categorical resolution} as follows.

\begin{defi}[The auxiliary definition of categorical resolution]\label{defi: categorical resolution modified form}
Let $X$ be a projective variety over a field $k$. Let $A$ be a DG algebra such that $\rD(X) \simeq \rD(A)$. Then a categorical resolution of $X$ is a smooth DG algebra $B$ with an adjoint pair of triangulated functors
$$
\pi^*: \rD(A)\to \rD(B) \text{ and } \pi_*: \rD(B)\to \rD(A)
$$
such that
\begin{enumerate}
\item $\pi_*\circ \pi^*\simeq id$;
\item both $\pi_*$ and $\pi^*$ commute with arbitrary direct sums;
\item For any $E\in \text{Perf}(A)$ and $F\in \text{Perf}(B)$, the vector space
$$
\bigoplus_{n\in \mathbb{Z}}\Hom_{\rD(B)}(\pi^*E,F[n])
$$
is finite dimensional.
\end{enumerate}
\end{defi}

\begin{rmk}
In the light of Remark \ref{rmk: coherent is dual to perfect}, Condition (3) in Definition \ref{defi: categorical resolution modified form} is equivalent to Condition (3) in Definition \ref{defi: categorical resolution}. Moreover, by the definition of perfect complexes, Condition (3) in Definition \ref{defi: categorical resolution modified form} could be replaced by the following weaker form.
\begin{enumerate}
\item[($\text{3}^{\prime}$)] The vector space
$$
\bigoplus_{n\in \mathbb{Z}}\Hom_{\rD(B)}(\pi^*A,B[n])
$$
is finite dimensional.
\end{enumerate}

Nevertheless we also notice that we need $X$ to be a projective variety to make sure that Condition (3) (or ($\text{3}^{\prime}$)) in Definition \ref{defi: categorical resolution modified form} could replace  Condition (3) in Definition \ref{defi: categorical resolution}. It may fail for general schemes.
\end{rmk}

We want to find the relation between Definition \ref{defi: categorical resolution modified form} and Definition \ref{defi: categorical resolution dga}. The difficulty is to show that the functor $\pi^*: \rD(A)\to \rD(B)$ is given by  a derived tensor product $(-)\otimes^{\mathbf{L}}_A T$ for a $T\in \rD(A^{op}\otimes B)$. It is a problem because we know that there exist DG algebras   which are not DG Morita equivalent but their derived categories are equivalent as triangulated categories.

Nevertheless, we can obtain another DG algebra $\widetilde{A}$ with $\rD(A)\simeq \rD(\widetilde{A})$ together with an $\widetilde{A}$-$B$ bimodule $T$ which gives a DG Morita equivalence. Actually we have the following proposition.

\begin{prop}\label{prop: another DG algebra with the same derived category and a bimodule}
Let $A$ and $B$ be DG algebras and $\pi^*: \rD(A)\leftrightarrows \rD(B): \pi_*$ be an adjoint pair of triangulated functors which satisfies Condition (1),(2),(3) in Definition \ref{defi: categorical resolution modified form}. Let $T:=\pi^*A\in \rD(B)$ and
$$
\widetilde{A}:=R\Hom_B(T,T).
$$
Then we can build a triangulated equivalence
$$
\phi: \rD(A)\overset{\sim}{\to} \rD(\widetilde{A}).
$$
\end{prop}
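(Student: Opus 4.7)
The plan is to identify the essential image of $\pi^*$ inside $\rD(B)$ with $\rD(\widetilde A)$ via Proposition \ref{prop: equivalence between dg category and module cateogy of dg algebra}, and then to compose with $\pi^*$ in order to produce $\phi$.

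First I would unpack what conditions (1) and (2) give. Condition (1) says $\operatorname{id}\to\pi_*\pi^*$ is an isomorphism, so $\pi^*$ is fully faithful. Condition (2) says $\pi_*$ preserves direct sums, which by the standard adjunction argument is equivalent to $\pi^*$ sending compact objects to compact objects. Since $A$ is a compact generator of $\rD(A)$, we conclude that $T=\pi^*A$ is a compact object of $\rD(B)$. Let $\mathscr T':=\pi^*(\rD(A))$ be the essential image; it is a strictly full triangulated subcategory of $\rD(B)$ closed under arbitrary direct sums (because $\pi^*$ preserves them). Full faithfulness forces $T$ to generate $\mathscr T'$: if $\pi^*M_0\in\mathscr T'$ satisfies $\Hom_{\rD(B)}(T,\pi^*M_0[n])=0$ for all $n$, then $\Hom_{\rD(A)}(A,M_0[n])=0$ for all $n$, forcing $M_0=0$. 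By Lemma \ref{lemma: the subcategory contains compact generator is the whole category}, $\mathscr T'$ coincides with the smallest strictly full triangulated subcategory of $\rD(B)$ containing $T$ and closed under direct sums, and $T$ is a compact generator of $\mathscr T'$.

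Next I would apply Proposition \ref{prop: equivalence between dg category and module cateogy of dg algebra}. Take $\mathcal C:=\text{h-proj}(B)$ as a DG enhancement of $\rD(B)$, and let $\mathcal I\subset\mathcal C$ be the full DG subcategory consisting of those h-projective DG modules whose class in $\rD(B)$ lies in $\mathscr T'$. Since coproducts of h-projective modules are h-projective and $\mathscr T'$ is closed under direct sums, $\mathcal I$ inherits all coproducts, and by the previous paragraph $T$ is a compact generator of $H^0(\mathcal I)\simeq\mathscr T'$. Choose an h-projective resolution $P\xrightarrow{\sim}T$ with $P\in\mathcal I$, set $I:=P$, $z:=\operatorname{id}_P$, put $\widetilde A:=\End_{\mathcal C}(P)$, and let $\beta$ be the identity. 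Every hypothesis of Proposition \ref{prop: equivalence between dg category and module cateogy of dg algebra} then holds tautologically, and because $P$ is h-projective, $\widetilde A$ is a DG algebra representative of $R\Hom_B(T,T)$. The proposition delivers an equivalence $\mathscr T'\simeq\rD(\widetilde A)$; composing with the equivalence $\pi^*\colon\rD(A)\xrightarrow{\sim}\mathscr T'$ yields the desired $\phi\colon\rD(A)\xrightarrow{\sim}\rD(\widetilde A)$.

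The main obstacle I anticipate is the careful bookkeeping for the DG enhancement: one must verify that $\mathcal I$ carries strict coproducts of the required sort so that Proposition \ref{prop: equivalence between dg category and module cateogy of dg algebra} genuinely applies, and one must confirm that $\widetilde A=\End_{\mathcal C}(P)$ computes $R\Hom_B(T,T)$ as a DG algebra and not merely as an object of $\rD(k)$. Both facts are standard features of h-projective resolutions, but they are the only nonformal inputs into the argument; everything else is a direct consequence of the adjunction $(\pi^*,\pi_*)$.
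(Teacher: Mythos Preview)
Your argument is correct and arrives at the same functor as the paper, but the route is organized differently. The paper defines $\phi$ directly as the composition $\rD(A)\xrightarrow{\pi^*}\rD(B)\xrightarrow{R\Hom_B(T,-)}\rD(\widetilde A)$ and then verifies fully faithfulness and essential surjectivity by two bare-hands d\'evissage arguments using Lemma~\ref{lemma: the subcategory contains compact generator is the whole category}: one first shows the natural map on $\Hom$'s is an isomorphism when the source is $A$ (using $\phi(A)=\widetilde A$ and $H^i(A)\cong H^i(\widetilde A)$), then bootstraps to all objects. You instead isolate the essential image $\mathscr T'\subset\rD(B)$, pass to the DG enhancement $\text{h-proj}(B)$, and invoke Proposition~\ref{prop: equivalence between dg category and module cateogy of dg algebra} to identify $\mathscr T'\simeq\rD(\widetilde A)$ in one stroke. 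The paper's approach is more elementary---no enhancement bookkeeping is needed, and the argument stays entirely at the level of triangulated categories---whereas your approach is conceptually cleaner once the DG machinery is in place, since the d\'evissage is absorbed into the cited proposition. The ``obstacles'' you flag (strict coproducts in $\mathcal I$, and that $\End_{\text{h-proj}(B)}(P)$ models $R\Hom_B(T,T)$ as a DG algebra) are indeed routine, so both proofs are complete; the paper's is simply shorter because it avoids setting up the enhancement.
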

\begin{proof}
The proof is essentially the same as that of \cite{lunts2010categorical} Proposition 2.6. We give a proof here for completeness.

First we define $\phi$ to be the composition
$$
\phi: \rD(A)\xrightarrow{\pi^*}\rD(B)\xrightarrow{R\Hom_B(T,-)}\rD(\widetilde{A}).
$$
$\phi$ commutes with direct sums because $\pi^*$ does and because $\pi^*$ is fully faithful.

Then we prove that $\phi$ is fully faithful. Let $\mathcal{D}_0\subset  \rD(A)$ be the strictly full triangulated subcategory consisting of objects $M$ such that the natural map
$$
\Hom_{ \rD(A)}(A,M[i])\to \Hom_{\rD(\widetilde{A})}(\phi(A),\phi(M)[i])
$$
is an isomorphism for any integer $i$. Since $\phi(A)=\widetilde{A}$ and $H^i(A)\cong H^i(\widetilde{A})$ for any $i$, it is clear that $A\in \mathcal{D}_0$. Moreover since $\phi$ commutes with arbitrary direct sums we see that $\mathcal{D}_0$ is closed under arbitrary direct sums. By Lemma \ref{lemma: the subcategory contains compact generator is the whole category}, $\mathcal{D}_0=\rD(A)$. Similarly let  $\mathcal{D}_1\subset  \rD(A)$ be the strictly full triangulated subcategory consisting of objects $N$ such that the natural map
$$
\Hom_{ \rD(A)}(N,M)\to \Hom_{\rD(\widetilde{A})}(\phi(N),\phi(M))
$$
is an isomorphism for any object $M\in \rD(A)$. By the above reasoning $A\in \mathcal{D}_1$ and it is clear that $\mathcal{D}_1$ is closed under arbitrary direct sums. Again by Lemma \ref{lemma: the subcategory contains compact generator is the whole category} we get $\mathcal{D}_1= \rD(A)$, i.e. $\phi$ is fully faithful.

For the essential surjectivity, since $\phi(A)=\widetilde{A}$ we know that $\phi(\rD(A))\supset \text{Perf}(\widetilde{A})$. In addition, it is clear that $\phi(\rD(A))$ is closed under direct sums. Again by Lemma \ref{lemma: the subcategory contains compact generator is the whole category} $\phi$ is essentially surjective.
\end{proof}

\begin{rmk}
From the construction it is clear that $H^i(A)\cong H^i(\widetilde{A})$ for any $i\in \mathbb{Z}$. However, a priori there is no quasi-isomorphism between $A$ and $\widetilde{A}$ therefore the fact that $\rD(A)\simeq \rD(\widetilde{A})$ is not trivial.
\end{rmk}

Now we give our version of definition of categorical resolution of singularities.

\begin{defi}[Algebraic categorical resolution]\label{defi: categorical resolution our form}
Let $X$ be a projective variety over a  field $k$. Then an algebraic categorical resolution of $X$ is a triple $(A,B,T)$ where $A$ is a DG algebra such that $\rD(X) \simeq \rD(A)$, $B$ is a smooth DG algebra and $T$ is an
$A$-$B$ bimodule such that
\begin{enumerate}
\item $H^i(A)\to \Hom_{\rD(B)}(T,T[i])$ is an isomorphism for any $i\in \mathbb{Z}$;
\item $T$ defines a compact object in $\rD(B)$;
\item $\bigoplus_i \Hom_{\rD(B)}(T,B[i])$ is finite dimensional.
\end{enumerate}
\end{defi}

It is clear that a triple $(A,B,T)$ as in Definition \ref{defi: categorical resolution our form} gives a triple $(\mathscr{T},\pi^*,\pi_*)$ as in Definition \ref{defi: categorical resolution}. For the other direction of implication we have the following proposition.

\begin{prop}\label{prop: a categorical resolution gives a categorical resolution of dga}
Let $X$ be a projective variety over a field $k$ and $(\mathscr{T},\pi^*,\pi_*)$ be a categorical resolution of $X$ in the sense of Definition \ref{defi: categorical resolution}. If $\mathscr{T}$ has a compact generator, then we have an algebraic categorical resolution $(A,B,T)$ of $X$ in the sense of Definition \ref{defi: categorical resolution our form} such that $\rD(B)\simeq \mathscr{T}$.
\end{prop}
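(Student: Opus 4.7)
The plan is to build $(A,B,T)$ by choosing compact generators on both sides of the adjoint pair and taking $T$ to be the image under $\pi^{*}$ of the compact generator of $\rD(X)$. By Proposition~\ref{prop: compact objects and compact generators in D(X)}, $\rD(X)$ has a compact generator $\mathcal{E}$; setting $A_{0}:=R\End_{X}(\mathcal{E})$ (computed via an h-injective resolution) yields an equivalence $\rD(X)\simeq \rD(A_{0})$ sending $\mathcal{E}$ to $A_{0}$ and $\rD^{\perf}(X)$ to $\Perf(A_{0})$. Symmetrically, the compact generator $G$ of $\mathscr{T}$ provided by hypothesis, realized inside a DG enhancement of $\mathscr{T}$, determines $B:=\End^{\bullet}(G)$, and Proposition~\ref{prop: equivalence between dg category and module cateogy of dg algebra} produces an equivalence $\mathscr{T}\simeq \rD(B)$. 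To ensure that $B$ is smooth, I would start from a \emph{smooth} DG enhancement of $\mathscr{T}$ (which exists by Definition~\ref{defi: smooth DG cat and smooth triangulated cat} together with the smoothness clause of Definition~\ref{defi: categorical resolution}), so that the recognition equivalence is in fact induced by a quasi-equivalence of DG categories, under which smoothness passes to $B$.

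Transporting the adjoint pair $(\pi^{*},\pi_{*})$ through the two equivalences gives an adjoint pair $\rD(A_{0})\leftrightarrows \rD(B)$ satisfying the conditions of Definition~\ref{defi: categorical resolution modified form}. Set $T:=\pi^{*}(A_{0})\in \rD(B)$, and replace it by an h-projective $B$-module representative so that $A:=\End^{\bullet}_{B}(T)$ is a bona fide DG algebra and $T$ is an honest $A$-$B$ bimodule. By construction $H^{i}(A)\cong \Hom_{\rD(B)}(T,T[i])$, giving condition (1) of Definition~\ref{defi: categorical resolution our form}. Applying Proposition~\ref{prop: another DG algebra with the same derived category and a bimodule} to the transported adjoint pair produces a triangulated equivalence $\rD(A_{0})\simeq \rD(A)$, hence $\rD(X)\simeq \rD(A)$.

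To check condition (2) of Definition~\ref{defi: categorical resolution our form}, note that $A_{0}$ is compact in $\rD(A_{0})$ (it is the canonical compact generator) and that $\pi^{*}$ preserves compact objects because its right adjoint $\pi_{*}$ commutes with arbitrary direct sums by hypothesis; hence $T=\pi^{*}(A_{0})$ is compact in $\rD(B)$. For condition (3), the equivalences identify
\[
\bigoplus_{i\in \mathbb{Z}}\Hom_{\rD(B)}(T,B[i])\;\cong\;\bigoplus_{i\in \mathbb{Z}}\Hom_{\mathscr{T}}(\pi^{*}\mathcal{E},G[i]),
\]
and the right-hand side is finite dimensional by Remark~\ref{rmk: coherent is dual to perfect} applied to $G\in \mathscr{T}^{c}$, together with the hypothesis $\pi_{*}(\mathscr{T}^{c})\subset \rD^{b}(\coh(X))$ from Definition~\ref{defi: categorical resolution}.

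I expect the main obstacle to be the smoothness of $B$: as the paper itself remarks, two DG algebras can have equivalent derived categories without being DG Morita equivalent, so smoothness cannot be transported through a bare triangulated equivalence $\rD(B)\simeq \mathscr{T}$. The intended resolution is to carry out the identification $\mathscr{T}\simeq \rD(B)$ at the DG level from the outset, working inside a smooth DG enhancement of $\mathscr{T}$ whose existence is guaranteed by the smoothness assumption on $\mathscr{T}$ and invoking Proposition~\ref{prop: equivalence between dg category and module cateogy of dg algebra} in its quasi-equivalence form, so that smoothness of the enhancement descends to $B$.
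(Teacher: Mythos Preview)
Your proposal is correct and follows essentially the same route as the paper: choose $B$ from a compact generator of $\mathscr{T}$, set $T=\pi^{*}A_{0}$ and $A=R\Hom_{B}(T,T)$, then invoke Proposition~\ref{prop: another DG algebra with the same derived category and a bimodule} and verify the three conditions. The paper's two-line proof glosses over the smoothness of $B$ that you rightly flag as the main obstacle; your proposed fix---compute $B$ inside a smooth DG enhancement of $\mathscr{T}$ so that smoothness passes along the resulting DG Morita equivalence---is exactly what is implicitly needed.
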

\begin{proof}
Since $\mathscr{T}$ has a compact generator we can find a DG algebra $B$ such that $\rD(B)\simeq \mathscr{T}$. We use the $(\widetilde{A},T)$ in Proposition \ref{prop: another DG algebra with the same derived category and a bimodule} as the $(A,T)$ in Definition \ref{defi: categorical resolution our form}. It is clear that they satisfy all conditions in Definition \ref{defi: categorical resolution our form}.
\end{proof}

\section{Scalar extensions of categorical resolutions of singularities}\label{section: Scalar extensions of categorical resolutions of singularities}
\subsection{Scalar extensions of derived categories of DG algebras}\label{section: Scalar extensions of derived categories of DG algebras}
In this section we study scalar extensions of categorical resolution. First we notice that the scalar extension of triangulated categories has been studied in \cite{sosna2014scalar}. However, we do not know whether the scalar extension as defined in \cite{sosna2014scalar} Definition 9 preserves fully faithful functors hence it is difficult to use that definition directly to study the scalar extension of categorical resolutions.

Nevertheless there is another approach to the scalar extension of triangulated categories which is outlined in \cite{sosna2014scalar} Remark 9.

\begin{defi}\label{defi: scalar extension of derived category of dga}
Let $A$ be a DG algebra over a base field $k$ and we consider the derived category $\rD(A)$. For a field extension $k^{\prime}/k$, we denote $A\otimes_k k^{\prime}$ by $A_{k^{\prime}}$. Then we call $\rD(A_{k^{\prime}})$  the scalar extension of $\rD(A)$.
\end{defi}

We expect that the scalar extension depends on the triangulated category $\rD(A)$ only. In more details we want the following conjecture to be true.

\begin{conj}\label{conj: scalar extension is independent of the choice of algebra}
Let $A$ and $B$ be two DG algebras over $k$ such that we have a triangulated equivalence between derived categories $\rD(A)\overset{\sim}{\to}\rD(B)$. Then for any field extension $k^{\prime}/k$ we have a triangulated equivalence
$$
\rD(A_{k^{\prime}})\overset{\sim}{\to}\rD(B_{k^{\prime}}).
$$
\end{conj}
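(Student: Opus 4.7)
My plan is to reduce the conjecture to the case where the equivalence $\rD(A)\simeq \rD(B)$ is realized by a DG Morita equivalence, and then to base-change the underlying bimodule. Given a triangulated equivalence $F: \rD(A)\overset{\sim}{\to} \rD(B)$, I would set $M:=F(A)\in \rD(B)$. Since $A$ is a compact generator of $\rD(A)$, the object $M$ is a compact generator of $\rD(B)$. Define the DG algebra $\widetilde{A}:=R\Hom_B(M,M)$. Applying Proposition \ref{prop: equivalence between dg category and module cateogy of dg algebra} to a DG enhancement of $\rD(B)$ (such as h-proj$(B)$), the functor $R\Hom_B(M,-)$ induces a triangulated equivalence $\rD(B)\simeq \rD(\widetilde{A})$, and composing with $F$ gives $\rD(A)\simeq \rD(\widetilde{A})$ sending $A$ to $\widetilde{A}$. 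In particular one obtains $H^{\ast}(A)\cong H^{\ast}(\widetilde{A})$ as graded algebras.

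The next step is to promote this identification to a genuine zigzag of DG quasi-isomorphisms $A\simeq \widetilde{A}$. If that succeeds, the object $M$ upgrades to an $A$-$B$ bimodule $T$ and the functor $(-)\otimes^{\mathbf{L}}_A T$ realizes $F$ as a DG Morita equivalence. With this reduction in place, the remainder is routine. The base change $T_{k'}:=T\otimes_k k'$ is an $A_{k'}$-$B_{k'}$ bimodule, and I would check that $(-)\otimes^{\mathbf{L}}_{A_{k'}} T_{k'}$ is a triangulated equivalence $\rD(A_{k'})\simeq \rD(B_{k'})$ by testing on the compact generator $A_{k'}$. Since extension of scalars preserves compactness and generation, $T_{k'}$ is a compact generator of $\rD(B_{k'})$; since extension of scalars commutes with $R\Hom$ between compact objects, $R\Hom_{B_{k'}}(T_{k'},T_{k'})\simeq R\Hom_B(T,T)\otimes_k k'\simeq A_{k'}$ as DG algebras. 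A further application of Proposition \ref{prop: equivalence between dg category and module cateogy of dg algebra} then delivers $\rD(B_{k'})\simeq \rD(A_{k'})$.

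The hard part will be the lifting step from the graded algebra identification $H^{\ast}(A)\cong H^{\ast}(\widetilde{A})$ to a DG quasi-isomorphism $A\simeq \widetilde{A}$. This is precisely the obstruction recorded in the remark following Definition \ref{defi: dg morita equivalence}: the example of Rizzardo and Van den Bergh produces DG algebras whose derived categories are triangulated equivalent but which are not DG Morita equivalent, so in full generality no such lifting can be expected. For this reason I do not see a route to the conjecture as stated, and I would be inclined to attempt it instead under additional hypotheses that force uniqueness of DG enhancements, for instance when $A$ is cohomologically bounded and smooth, when $H^{\ast}(A)$ is intrinsically formal (so that both $A$ and $\widetilde{A}$ are quasi-isomorphic to $H^{\ast}(A)$), or when the equivalence $F$ is already assumed to be given by a bimodule (as is automatic in all the categorical resolutions constructed in this paper). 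Under any such hypothesis the first step becomes tractable by Lunts--Orlov type uniqueness, and the base change argument above then yields $\rD(A_{k'})\simeq \rD(B_{k'})$.
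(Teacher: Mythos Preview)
Your assessment is correct and matches the paper's own treatment: this statement is labeled a \emph{Conjecture} in the paper, and the author explicitly writes ``So far we do not know whether Conjecture~\ref{conj: scalar extension is independent of the choice of algebra} is true.'' There is no proof to compare against. Your identification of the obstruction (the Rizzardo--Van den Bergh example showing that triangulated equivalence does not force DG Morita equivalence) is exactly the reason the paper leaves this open.

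Your proposed conditional strategy---reduce to a DG Morita equivalence via a bimodule $T$, then base-change $T$ and verify that $T_{k'}$ still satisfies the compact-generator and endomorphism conditions---is precisely what the paper does in the special case it can handle. Proposition~\ref{prop: dg morita equivalence is preserved under scalar extension} carries out the base-change step using the criterion of Lemma~\ref{lemma: when a bimodules gives and equivalence} together with Lemmas~\ref{lemma: scalar extension of morphisms in derived category another form} and~\ref{lemma: scalar extension of morphisms in derived category}, and Proposition~\ref{prop: uniqueness of dga for enhancement of a variety} supplies the missing bimodule under the geometric hypothesis $\rD(A)\simeq \rD(X)$ for a projective variety $X$, via Lunts--Orlov uniqueness of enhancements (Theorem~\ref{thm: uniqueness of dg-enhancement}) and To\"en's representability result. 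So your list of sufficient hypotheses (uniqueness of enhancements, or the equivalence already given by a bimodule) is on the mark, and the paper's workaround lives entirely within that list.
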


So far we do not know whether Conjecture \ref{conj: scalar extension is independent of the choice of algebra} is true. Nevertheless, we can prove it in an important special case which is sufficient for our use in algebraic geometry. First we need the following lemmas.

\begin{lemma}\label{lemma: scalar extension of morphisms in derived category another form}
Let $A$ be a DG algebra over $k$ and $k^{\prime}/k$ be a field extension. For an object $F$ in $\rD(A_{k^{\prime}})$ the forgetful functor maps $F$ to an object in $\rD(A)$. Moreover for  any object $E$ in $\rD(A)$ we have
$$
\Hom_{\rD(A_{k^{\prime}})}(E_{k^{\prime}},F)\cong \Hom_{\rD(A)}(E,F).
$$
\end{lemma}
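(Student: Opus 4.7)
The plan is to identify the scalar extension $(-)_{k'}=(-)\otimes_k k'$ with the derived extension of scalars along the DG algebra map $A\to A_{k'}$, and then invoke the usual tensor--hom adjunction between extension and restriction of scalars.

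First I would note that $A_{k'}=A\otimes_k k'$ carries an obvious $A$-$A_{k'}$-bimodule structure, and that for any right DG $A$-module $M$ there is a canonical identification of right DG $A_{k'}$-modules $M_{k'}=M\otimes_k k'\cong M\otimes_A A_{k'}$. Because $k'/k$ is a field extension, $k'$ is free, hence flat, as a $k$-module, so the functor $(-)\otimes_k k'$ is exact on the category of DG $A$-modules. Consequently it preserves acyclic objects, descends to a well-defined triangulated functor $\rD(A)\to \rD(A_{k'})$, and moreover agrees with the derived tensor product $(-)\otimes_A^{\mathbf{L}} A_{k'}$ without the need for a resolution. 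Dually, restriction of scalars along $A\to A_{k'}$ is exact and induces the forgetful functor $\rD(A_{k'})\to \rD(A)$ referenced in the statement.

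Next I would invoke the strict (underived) tensor--hom adjunction: for any right DG $A$-module $M$ and right DG $A_{k'}$-module $N$,
$$
\Hom_{A_{k'}}(M\otimes_A A_{k'},N)=\Hom_A(M,N),
$$
as complexes of $k$-vector spaces. To pass to the derived category, I would take an h-projective resolution $P\xrightarrow{\sim} E$ in $\rD(A)$. Since $A_{k'}$ is flat over $A$, the module $P_{k'}=P\otimes_A A_{k'}$ is itself h-projective over $A_{k'}$: for any acyclic $A_{k'}$-module $N$, the strict adjunction gives $\Hom_{A_{k'}}(P_{k'},N)\cong \Hom_A(P,N)$, which is acyclic because $N$ is still acyclic after restriction to $A$ and $P$ is h-projective. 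Moreover the canonical map $P_{k'}\to E_{k'}$ is a quasi-isomorphism, again by flatness of $k'/k$.

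Combining these observations gives
$$
\Hom_{\rD(A_{k'})}(E_{k'},F)=H^0\Hom_{A_{k'}}(P_{k'},F)=H^0\Hom_A(P,F)=\Hom_{\rD(A)}(E,F),
$$
as desired, where in the last step the restriction of $F$ to $\rD(A)$ plays the role of a target computing morphisms out of the h-projective $P$. The main (and essentially only) subtlety is the bookkeeping involved in the second paragraph, namely that extension of scalars along a flat base change preserves h-projectivity; this is where the hypothesis that $k'/k$ is a field extension, and not merely an arbitrary ring map, is used.
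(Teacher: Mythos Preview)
Your proof is correct and follows essentially the same approach as the paper: both identify $(-)_{k'}$ with the derived extension of scalars along the flat map $A\to A_{k'}$ and then invoke the tensor--hom adjunction with the forgetful (restriction) functor. The paper's proof is a one-liner asserting this adjunction at the derived level, whereas you have carefully unpacked the details (exactness, preservation of h-projectivity, compatibility with quasi-isomorphisms), but the underlying argument is identical.
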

\begin{proof}
Since $A\to A_{k^{\prime}}$ is flat, $(-)\otimes_k k^{\prime}=(-)\otimes_A A_{k^{\prime}}$ gives the derived tensor product functor $\rD(A)\to \rD(A_{k^{\prime}})$ which is left adjoint to the forgetful functor $\rD(A_{k^{\prime}})\to \rD(A)$.
\end{proof}

\begin{lemma}\label{lemma: scalar extension of morphisms in derived category}
Let $A$ be a DG algebra over $k$ and $k^{\prime}/k$ be a field extension. For any objects $E$ in Perf$(A)$ and $F$ in $\rD(A)$, the natural map
$$
\Hom_{\rD(A)}(E,F)\otimes k^{\prime} \to \Hom_{\rD(A_{k^{\prime}})}(E_{k^{\prime}},F_{k^{\prime}})
$$
is an isomorphism.
\end{lemma}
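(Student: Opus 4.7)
The plan is to reduce the statement to the compactness of $E$ together with the flatness of $k'/k$. First I would apply Lemma \ref{lemma: scalar extension of morphisms in derived category another form} with $F$ replaced by $F_{k'}$ to obtain the canonical identification
$$
\Hom_{\rD(A_{k'})}(E_{k'}, F_{k'})\cong \Hom_{\rD(A)}(E, F_{k'}).
$$
This reduces the problem to showing that the natural map $\Hom_{\rD(A)}(E, F)\otimes_k k'\to \Hom_{\rD(A)}(E, F_{k'})$ is an isomorphism for $E\in \text{Perf}(A)$.

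Next I would choose a $k$-basis $\{e_i\}_{i\in I}$ of $k'$. Under the forgetful functor $\rD(A_{k'})\to \rD(A)$, the DG $A_{k'}$-module $F_{k'}=F\otimes_k k'$ decomposes as a direct sum $\bigoplus_{i\in I} F\cdot e_i$, each summand isomorphic to $F$. By Lemma \ref{lemma: characterization of compact objects} any $E\in \text{Perf}(A)$ is a compact object of $\rD(A)$, so $\Hom_{\rD(A)}(E,-)$ commutes with this direct sum and
$$
\Hom_{\rD(A)}(E, F_{k'}) \cong \bigoplus_{i\in I}\Hom_{\rD(A)}(E, F)\cong \Hom_{\rD(A)}(E, F)\otimes_k k'.
$$
Composing with the adjunction isomorphism above yields the desired isomorphism.

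The main obstacle is a bookkeeping point rather than a mathematical one: one must check that the composite of these canonical isomorphisms coincides with the map $\phi\otimes \lambda\mapsto \lambda\cdot (\phi\otimes_k k')$ appearing in the statement. This compatibility follows directly from the explicit description of the adjunction in Lemma \ref{lemma: scalar extension of morphisms in derived category another form}. As an alternative, one could argue by d\'evissage: the class of $E\in \rD(A)$ for which the natural map is an isomorphism is a strictly full triangulated subcategory closed under shifts, cones, and direct summands; it contains $E=A$ (where the claim reduces to $H^0(F)\otimes_k k'\cong H^0(F\otimes_k k')$, which holds because $k'$ is flat over $k$), and hence by Lemma \ref{lemma: characterization of compact objects} contains all of $\text{Perf}(A)$. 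The compactness argument above is shorter, but the d\'evissage argument is a useful sanity check.
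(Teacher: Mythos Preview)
Your proof is correct. The paper's own argument is exactly your d\'evissage alternative: verify the case $E=A$ directly (where it reduces to $H^0(F)\otimes_k k'\cong H^0(F_{k'})$ by flatness) and then build up all of $\text{Perf}(A)$ from $A$ via shifts, finite direct sums, cones, and summands. Your primary argument---using the adjunction of Lemma~\ref{lemma: scalar extension of morphisms in derived category another form} together with the decomposition $F_{k'}\cong\bigoplus_{i\in I}F$ in $\rD(A)$ and compactness of $E$---is a valid repackaging of the same idea: rather than inducting over the construction of $E$, you invoke the end result of that induction (that $E$ is compact, hence $\Hom_{\rD(A)}(E,-)$ commutes with direct sums) as a black box. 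Both routes rest on the same two ingredients, flatness of $k'/k$ and $E\in\text{Perf}(A)$, so neither is more general; your version trades the explicit d\'evissage for the small naturality check you flag at the end.
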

\begin{proof}
The statement is obviously true if $E=A$. Then we use the fact that any perfect complex of $A$-module can be obtained from $A$ by finite direct sums, shifts, direct summands and exact triangles.
\end{proof}

\begin{prop}\label{prop: dg morita equivalence is preserved under scalar extension}
Let $A$ and $B$ be two DG algebras over $k$ with an  $A$-$B$ bimodule $T$ giving a DG Morita equivalence as in Definition \ref{defi: dg morita equivalence}. Then for any field extension $k^{\prime}/k$, the $A_{k^{\prime}}$-$B_{k^{\prime}}$ bimodule $T_{k^{\prime}}$ also gives a DG Morita equivalence between $A_{k^{\prime}}$ and $B_{k^{\prime}}$.
\end{prop}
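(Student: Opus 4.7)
The plan is to verify that the functor $\Phi_{k'} := (-)\otimes^{\mathbf{L}}_{A_{k'}} T_{k'}: \rD(A_{k'}) \to \rD(B_{k'})$ is an equivalence of triangulated categories, by establishing fully faithfulness first and essential surjectivity second. The overall strategy mirrors the proof of Proposition \ref{prop: another DG algebra with the same derived category and a bimodule}, with Lemma \ref{lemma: the subcategory contains compact generator is the whole category} as the main dévissage tool. Observe at the outset that $\Phi_{k'}$ commutes with arbitrary direct sums and sends the free module $A_{k'}$ to $T_{k'}$.

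First I would check that $T_{k'}$ is compact in $\rD(B_{k'})$. Since $T \cong A \otimes^{\mathbf{L}}_A T$ is the image of the compact object $A$ under the assumed equivalence, $T$ is perfect in $\rD(B)$ by Lemma \ref{lemma: characterization of compact objects}, and perfectness is manifestly preserved under $(-)\otimes_k k'$. With this, Lemma \ref{lemma: scalar extension of morphisms in derived category} applied over $B$ yields
\[
\Hom_{\rD(B_{k'})}(T_{k'}, T_{k'}[i]) \cong \Hom_{\rD(B)}(T, T[i]) \otimes_k k' \cong H^i(A)\otimes_k k' \cong H^i(A_{k'})
\]
for every $i \in \mathbb{Z}$, matching $\Hom_{\rD(A_{k'})}(A_{k'}, A_{k'}[i])$. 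Fully faithfulness of $\Phi_{k'}$ then follows by the two-step dévissage from the proof of Proposition \ref{prop: another DG algebra with the same derived category and a bimodule}: let $\mathcal{D}_0 \subset \rD(A_{k'})$ be the strictly full triangulated subcategory of $M$ such that $\Hom_{\rD(A_{k'})}(A_{k'}, M[i]) \to \Hom_{\rD(B_{k'})}(T_{k'}, \Phi_{k'}(M)[i])$ is an isomorphism for all $i$. The above computation places $A_{k'}$ in $\mathcal{D}_0$, and compactness of both $A_{k'}$ and $T_{k'}$ makes $\mathcal{D}_0$ closed under direct sums, so Lemma \ref{lemma: the subcategory contains compact generator is the whole category} gives $\mathcal{D}_0 = \rD(A_{k'})$. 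A second, symmetric application of the same device with the source varying instead of the target promotes this to full fidelity on all of $\rD(A_{k'})$.

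For essential surjectivity I would show that $T_{k'}$ generates $\rD(B_{k'})$. If $N \in \rD(B_{k'})$ satisfies $\Hom_{\rD(B_{k'})}(T_{k'}, N[i]) = 0$ for all $i$, the adjunction in Lemma \ref{lemma: scalar extension of morphisms in derived category another form} applied to $B$ gives $\Hom_{\rD(B)}(T, N[i]) = 0$ for all $i$, where $N$ is viewed as a $B$-module by restriction. Since $T$ is a compact generator of $\rD(B)$ (being the image of the compact generator $A$ under the equivalence), $N = 0$ in $\rD(B)$ and hence in $\rD(B_{k'})$, as the restriction functor is conservative. Now the essential image of $\Phi_{k'}$ is a strictly full triangulated subcategory of $\rD(B_{k'})$ that is closed under direct sums and contains the compact generator $T_{k'}$, so Lemma \ref{lemma: the subcategory contains compact generator is the whole category} forces it to be all of $\rD(B_{k'})$.

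The step I expect to be the main obstacle, and the place where the Morita hypothesis really gets used, is the initial verification that $T$ is perfect over $B$ so that Lemma \ref{lemma: scalar extension of morphisms in derived category} applies with $E = T$. Without the compactness of $T_{k'}$, both the dévissage for full fidelity and the generation argument would collapse, since the test subcategories would fail to be closed under direct sums and $T_{k'}$ would not qualify as a compact generator downstairs.
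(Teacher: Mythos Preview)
Your proof is correct and uses the same key inputs as the paper---compactness of $T$ over $B$ (hence of $T_{k'}$ over $B_{k'}$), Lemma~\ref{lemma: scalar extension of morphisms in derived category} to get $\Hom_{\rD(B_{k'})}(T_{k'},T_{k'}[i])\cong H^i(A_{k'})$, and Lemma~\ref{lemma: scalar extension of morphisms in derived category another form} to deduce that $T_{k'}$ generates $\rD(B_{k'})$. The difference is organizational: the paper isolates a criterion (its Lemma~\ref{lemma: when a bimodules gives and equivalence}, citing \cite[Tag 09S5]{stacks-project}) stating that $(-)\otimes^{\mathbf L}_A T$ is an equivalence if and only if $T$ is compact in $\rD(B)$, generates $\rD(B)$, and satisfies $\Hom_{\rD(B)}(T,T[i])\cong H^i(A)$ for all $i$, and then simply checks that $T_{k'}$ inherits these three properties. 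You instead reprove that criterion inline by running the two-step d\'evissage of Proposition~\ref{prop: another DG algebra with the same derived category and a bimodule}. The paper's route is shorter and more transparent, since the criterion is reusable and the verification of the three conditions for $T_{k'}$ is then a one-line application of the two scalar-extension lemmas; your route is more self-contained but duplicates work already encapsulated elsewhere.
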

\begin{proof}
We need the following criterion on DG Morita equivalence.
\begin{lemma}\label{lemma: when a bimodules gives and equivalence}
Let $A$ and $B$ be DG algebras over a field $k$ and $\theta: \rD(A)\to \rD(B)$ be a triangulated functor given by $(-)\otimes^{\mathbf{L}}_A T$ where $T$ is an $A$-$B$ bimodule. Then $\theta$ is a triangulated equivalence   if and only if the following condition holds.
\begin{enumerate}
\item $T$ is compact if considered as an object in $\rD(B)$, i.e. $T$ belongs to the full subcategory Perf$(B)$;
\item If $N$ is an object in $\rD(B)$ and $\Hom_{\rD(B)}(T,N[i])=0$ for any $i\in \mathbb{Z}$, then we have $N=0$;
\item $\Hom_{\rD(B)}(T,T[i])\cong H^i(A)$ for any $i\in \mathbb{Z}$.
\end{enumerate}
\end{lemma}
\begin{proof}[The proof of Lemma \ref{lemma: when a bimodules gives and equivalence}] See \cite[Tag 09S5]{stacks-project} Lemma 22.28.2.
\end{proof}
Now assume $T$ satisfies Condition (1), (2), (3). Then it is clear that $T_{k^{\prime}}$ satisfies Condition (1). Applying Lemma \ref{lemma: scalar extension of morphisms in derived category another form} and  Lemma \ref{lemma: scalar extension of morphisms in derived category}, we see that $T_{k^{\prime}}$ satisfies Condition (2) and (3). This finishes the proof of Proposition \ref{prop: dg morita equivalence is preserved under scalar extension}.
\end{proof}

We want to show that the scalar extension in Definition \ref{defi: scalar extension of derived category of dga} is compatible with the base change of schemes. In more details, let $X$ be a quasi-compact, separated scheme over $k$ and $\rD(X)$ be the derived category of complexes of quasi-coherent $\mathcal{O}_X$-modules. Let $\mathcal{E}$ be a compact generator of $\rD(X)$ and $A$ be the DG algebra $R\Hom_X(\mathcal{E},\mathcal{E})$. It is well-known  that $\rD(X)\simeq \rD(A)$, see \cite{bondal2003generators} Corollary 3.1.8 and interested readers could obtain an explicit proof using Proposition \ref{prop: equivalence between dg category and module cateogy of dg algebra}.

To study scalar extensions of schemes we first have the following lemma.

\begin{lemma}\label{lemma: scalar extension of morphisms in derived category of schemes}
Let $X$ be as above and $\mathcal{E}\in \Perf(X)$ and $\mathcal{F}\in \rD(X)$. Let $k^{\prime}/k$ be a field extension and $X_{k^{\prime}}:=X\times_k k^{\prime}$ be the base change of $X$ and $p: X_{k^{\prime}}\to X$ be the projection. Then the nature map
$$
\Hom_{\rD(X)}(\mathcal{E},\mathcal{F})\otimes_k k^{\prime}\to \Hom_{\rD(X_{k^{\prime}})}(p^*\mathcal{E},p^*\mathcal{F})
$$
is an isomorphism.
\end{lemma}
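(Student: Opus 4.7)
The plan is to reduce the claim to the identity
$$\Hom_{\rD(X)}(\mathcal{E},\, \mathcal{F}\otimes_k k') \;\cong\; \Hom_{\rD(X)}(\mathcal{E}, \mathcal{F})\otimes_k k',$$
after which the key ingredient is that perfect complexes on $X$ are compact.

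First I would exploit the geometry of $p: X_{k'}\to X$. Since $\Spec k' \to \Spec k$ is affine and flat, so is its base change $p$; hence $Rp_* = p_*$ on quasi-coherent complexes, and $(p^*, p_*)$ form an adjoint pair between $\rD(X_{k'})$ and $\rD(X)$. By adjunction the right-hand side of the lemma becomes $\Hom_{\rD(X)}(\mathcal{E}, p_*p^*\mathcal{F})$. The projection formula, valid since $p$ is affine, gives $p_*p^*\mathcal{F} \cong \mathcal{F}\otimes^{\mathbf{L}}_{\mathcal{O}_X} p_*\mathcal{O}_{X_{k'}}$, and on any affine open $\Spec R\subset X$ the sheaf $p_*\mathcal{O}_{X_{k'}}$ is $R\otimes_k k'$, which is $k$-flat. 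Hence $p_*p^*\mathcal{F} \cong \mathcal{F}\otimes_k k'$ in $\rD(X)$ (no derived correction is needed).

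Next, fix a $k$-basis $\{e_\alpha\}_{\alpha\in I}$ of $k'$. This induces an isomorphism $\mathcal{F}\otimes_k k' \cong \bigoplus_{\alpha\in I}\mathcal{F}$ of complexes of $\mathcal{O}_X$-modules. By Proposition \ref{prop: compact objects and compact generators in D(X)} the perfect complex $\mathcal{E}$ is compact in $\rD(X)$, so $\Hom_{\rD(X)}(\mathcal{E}, -)$ commutes with this direct sum:
$$\Hom_{\rD(X)}(\mathcal{E},\, \mathcal{F}\otimes_k k') \;\cong\; \bigoplus_{\alpha\in I}\Hom_{\rD(X)}(\mathcal{E}, \mathcal{F}) \;\cong\; \Hom_{\rD(X)}(\mathcal{E}, \mathcal{F})\otimes_k k'.$$
Concatenating with the previous identification finishes the isomorphism; it remains only to check that the composite agrees with the natural map of the statement, which amounts to observing that the unit $\mathcal{F}\to p_*p^*\mathcal{F}$ corresponds under the basis identification to the inclusion $\mathcal{F} = \mathcal{F}\otimes_k 1 \hookrightarrow \mathcal{F}\otimes_k k'$.

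The main obstacle is justifying the projection formula for unbounded complexes. Because $p$ is affine and flat, however, this localizes to the trivial module identity $M\otimes^{\mathbf{L}}_R (R\otimes_k k') \simeq M\otimes_k k'$ for any complex $M$ of $R$-modules, and the local isomorphisms glue over the quasi-compact separated scheme $X$. So the projection formula step, while the only non-formal input, causes no real difficulty, and the compactness of $\mathcal{E}$ does the rest.
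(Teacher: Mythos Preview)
Your argument is correct. The paper's own proof is the one-liner ``the proof is the same as that of Lemma~\ref{lemma: scalar extension of morphisms in derived category}'', meaning a d\'evissage on $\mathcal{E}$: the class of perfect $\mathcal{E}$ for which the natural map is an isomorphism forms a thick triangulated subcategory, and one checks a generating case directly (for DG algebras this is $E=A$, where the claim reduces to $H^0(F)\otimes_k k'\cong H^0(F_{k'})$). Your route is genuinely different: rather than build $\mathcal{E}$ up from pieces, you pass through the adjunction $(p^*,p_*)$ and the projection formula to rewrite the target as $\Hom_{\rD(X)}(\mathcal{E},\mathcal{F}\otimes_k k')$, and then use compactness of $\mathcal{E}$ together with a $k$-basis of $k'$ to pull the scalar extension outside. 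The paper's approach parallels the DG-algebra case cleanly, whereas yours avoids having to name a base case that generates $\Perf(X)$ (which for schemes is less immediate than the single object $A$ in $\rD(A)$) by packaging everything into the affine flat morphism $p$. One small wording fix: when you write that $R\otimes_k k'$ is ``$k$-flat'', what is actually needed for the derived tensor over $\mathcal{O}_X$ to coincide with the ordinary one is that $p_*\mathcal{O}_{X_{k'}}$ be $\mathcal{O}_X$-flat (locally $R$-flat), which holds since it is free on any $k$-basis of $k'$.
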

\begin{proof}
The proof is the same as that of Lemma \ref{lemma: scalar extension of morphisms in derived category}.
\end{proof}

Then we have the following proposition.

\begin{prop}\label{prop: scalar extension is compatible with base change of schemes}[See \cite{sosna2014scalar} Remark 9]
Let $X$ and $A$ be as above and $k^{\prime}/k$ be a field extension. Let $X_{k^{\prime}}:=X\times_k k^{\prime}$ be the base change of $X$. Then we have
$$
\rD(X_{k^{\prime}})\simeq \rD(A_{k^{\prime}}).
$$
\end{prop}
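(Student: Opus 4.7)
The plan is to transport the equivalence $\rD(X)\simeq \rD(A)$ across the field extension by showing that $p^{*}\mathcal{E}$, for $p:X_{k^{\prime}}\to X$ the projection, is a compact generator of $\rD(X_{k^{\prime}})$ whose derived endomorphism DG algebra is quasi-isomorphic to $A_{k^{\prime}}$, and then reapplying the tilting argument of Proposition \ref{prop: equivalence between dg category and module cateogy of dg algebra} that was used over the base field. The object $p^{*}\mathcal{E}$ is perfect, since pullback preserves perfectness, and therefore compact in $\rD(X_{k^{\prime}})$ by Proposition \ref{prop: compact objects and compact generators in D(X)}. For the generation property I would use that $p$ is affine (being the base change of $\Spec k^{\prime}\to \Spec k$) and faithfully flat, so that $Rp_{*}=p_{*}$ is exact and conservative on quasi-coherent sheaves; if $\mathcal{F}\in \rD(X_{k^{\prime}})$ satisfies $\Hom(p^{*}\mathcal{E},\mathcal{F}[i])=0$ for all $i$, adjunction gives $\Hom(\mathcal{E},Rp_{*}\mathcal{F}[i])=0$, forcing $Rp_{*}\mathcal{F}=0$ because $\mathcal{E}$ generates $\rD(X)$, and conservativity then forces $\mathcal{F}=0$.

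Next I would identify the derived endomorphism DG algebra of $p^{*}\mathcal{E}$ with $A_{k^{\prime}}$. Working in an h-injective DG enhancement of $\rD(X)$ in which $\mathcal{E}$ is realized as an object whose DG endomorphism algebra computes $A=R\Hom_{X}(\mathcal{E},\mathcal{E})$, the DG functor $p^{*}$ produces a DG model of $p^{*}\mathcal{E}$ together with a natural DG algebra morphism
\[
\varphi:A_{k^{\prime}}\longrightarrow \End^{\text{dg}}(p^{*}\mathcal{E})
\]
whose degree-$i$ cohomology is precisely the map of Lemma \ref{lemma: scalar extension of morphisms in derived category of schemes} and hence an isomorphism; therefore $\varphi$ is a quasi-isomorphism of DG algebras. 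Feeding this into Proposition \ref{prop: equivalence between dg category and module cateogy of dg algebra} with $\mathcal{I}$ the h-injective DG enhancement of $\rD(X_{k^{\prime}})$, $P$ the DG model of $p^{*}\mathcal{E}$, $I$ an h-injective replacement of $P$, $B=A_{k^{\prime}}$, and $\beta=\varphi$, yields the desired equivalence $\rD(X_{k^{\prime}})\simeq \rD(A_{k^{\prime}})$.

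The main obstacle is the passage from the cohomological isomorphism supplied by Lemma \ref{lemma: scalar extension of morphisms in derived category of schemes} to a genuine DG algebra quasi-isomorphism, since Proposition \ref{prop: equivalence between dg category and module cateogy of dg algebra} requires a strict DG morphism rather than merely an isomorphism of graded algebras. The remedy is to fix from the outset a DG enhancement on which $p^{*}$ and the base change $(-)\otimes_{k}k^{\prime}$ both act strictly as DG functors; then the $A$-action on the chosen DG model of $\mathcal{E}$, extended by scalars to $A_{k^{\prime}}$, automatically factors through $\End^{\text{dg}}(p^{*}\mathcal{E})$, producing $\varphi$ directly as a strict DG morphism and making the hypotheses of Proposition \ref{prop: equivalence between dg category and module cateogy of dg algebra} available.
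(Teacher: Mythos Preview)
Your proposal is correct and follows essentially the same route as the paper's proof: show $p^*\mathcal{E}$ is a compact generator of $\rD(X_{k'})$, identify its derived endomorphism algebra with $A_{k'}$ via Lemma~\ref{lemma: scalar extension of morphisms in derived category of schemes}, and apply Proposition~\ref{prop: equivalence between dg category and module cateogy of dg algebra}. The paper makes your final ``remedy'' concrete by assuming from the outset that $\mathcal{E}$ is h-injective, so that $A=\Hom_{\text{DG}(X)}(\mathcal{E},\mathcal{E})$ strictly and $A_{k'}\cong\Hom_{\text{DG}(X_{k'})}(\mathcal{E}_{k'},\mathcal{E}_{k'})$ on the nose; the required strict DG map $\beta$ is then this isomorphism, and compact generation is obtained by citing \cite{bondal2003generators} Lemma~3.4.1 rather than via conservativity of $p_*$.
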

\begin{proof}Without loss of generality we can assume that $\mathcal{E}$ is an h-injective complex of $\mathcal{O}_X$-modules, hence $A=R\Hom_X(\mathcal{E},\mathcal{E})$ is just the complex $\Hom_{\text{DG}(X)}(\mathcal{E},\mathcal{E})$, where DG$(X)$ is the DG category of complexes of $\mathcal{O}_X$-modules with quasi-coherent cohomologies.

Let $p: X_{k^{\prime}}\to X$ be the natural projection and $\mathcal{E}_{k^{\prime}}=p^*\mathcal{E}$. Then according to \cite{bondal2003generators} Lemma 3.4.1, $\mathcal{E}_{k^{\prime}}$ is also a compact generator of $\rD(X_{k^{\prime}})$. Moreover it is clear that $\Hom_{\text{DG}(X_{k^{\prime}})}(\mathcal{E}_{k^{\prime}},\mathcal{E}_{k^{\prime}})\cong \Hom_{\text{DG}(X)}(\mathcal{E},\mathcal{E})\otimes_k k^{\prime}=A_{k^{\prime}}$.

Let $\mathcal{I}\subset \text{DG}(X_{k^{\prime}})$ be the full pretriangulated DG subcategory consisting of h-injective objects and $z:\mathcal{E}_{k^{\prime}}\to I$ be an h-injective resolution in DG$(X_{k^{\prime}})$. $I$ is a compact generator of $\mathcal{I}$ and we want to apply Proposition \ref{prop: equivalence between dg category and module cateogy of dg algebra} here. It is clear that $\Hom_{\text{DG}(X_{k^{\prime}})}(\mathcal{E}_{k^{\prime}},J)\to \Hom_{\text{DG}(X_{k^{\prime}})}(I,J)$ is a quasi-isomorphism for any $J\in \mathcal{I}$. Moreover, $H^i(\Hom_{\text{DG}(X_{k^{\prime}})}(\mathcal{E}_{k^{\prime}},I))=\Hom_{\rD(X_{k^{\prime}})}(\mathcal{E}_{k^{\prime}},\mathcal{E}_{k^{\prime}}[i])$ and by Lemma \ref{lemma: scalar extension of morphisms in derived category of schemes} the latter is isomorphic to $\Hom_{\rD(X)}(\mathcal{E},\mathcal{E}[i])\otimes_k k^{\prime}=H^i(A_{k^{\prime}})$, hence $A_{k^{\prime}}\to \Hom_{\text{DG}(X_{k^{\prime}})}(\mathcal{E}_{k^{\prime}},I)$ is a quasi-isomorphism. Then by Proposition \ref{prop: equivalence between dg category and module cateogy of dg algebra},
$$
\Hom_{\text{DG}(X_{k^{\prime}})}(\mathcal{E}_{k^{\prime}},-): H^0(\mathcal{I})\to\rD(A_{k^{\prime}})
$$
is an equivalence of triangulated categories. On the other hand $H^0(\mathcal{I})\simeq \rD(X_{k^{\prime}})$ and we finish the proof.
\end{proof}

Now we move on to prove that the scalar extension does not depend on the choice of $A$. To apply Proposition \ref{prop: dg morita equivalence is preserved under scalar extension}, we will need the following important result.

\begin{prop}\label{prop: uniqueness of dga for enhancement of a variety}
Let $X$ be a projective variety and $A$, $B$ be two DG algebras such that $\rD(X)\simeq \rD(A)$ and $\rD(X)\simeq \rD(B)$. Then there exists an $A$-$B$ bimodule $T$ which gives a DG Morita equivalence $(-)\otimes^{\mathbf{L}}_A T: \rD(A)\xrightarrow{\sim} \rD(B)$.
\end{prop}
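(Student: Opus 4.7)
The plan is to reduce the statement to the uniqueness of DG enhancement for $\rD(X)$, which holds for projective (more generally, quasi-projective) varieties by the Lunts--Orlov theorem on uniqueness of enhancements. First I would view both $\text{h-proj}(A)$ and $\text{h-proj}(B)$ as DG enhancements of the triangulated category $\rD(X)$ via the given triangulated equivalences; each is a cocomplete, pretriangulated DG category whose homotopy category is identified with $\rD(X)$, with compact generator $A$ respectively $B$. Applying Lunts--Orlov uniqueness of enhancement then produces a quasi-equivalence of DG categories $\Phi: \text{h-proj}(A)\xrightarrow{\sim}\text{h-proj}(B)$ in the homotopy category of DG categories $\text{Hqe}$, extending the composed equivalence $\rD(A)\simeq \rD(X)\simeq \rD(B)$ on $H^0$.

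Next I would convert $\Phi$ into an honest bimodule. By To\"en's theorem identifying morphisms in $\text{Hqe}$ with (isomorphism classes of) right quasi-representable bimodules, $\Phi$ is represented by a $\text{h-proj}(A)$-$\text{h-proj}(B)$ bimodule. Restricting this bimodule along the inclusions of the one-object DG subcategories spanned by $A\in \text{h-proj}(A)$ and $B\in \text{h-proj}(B)$ yields an $A$-$B$ bimodule $T$. Concretely one can simply set $T:=\Phi(A)$ and equip it with the left $A$-action transported from the canonical isomorphism $A\cong \End_A(A)$.

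Finally, to conclude that $(-)\otimes^{\mathbf{L}}_A T$ is a DG Morita equivalence, I would verify the three criteria of Lemma \ref{lemma: when a bimodules gives and equivalence}. Because $\Phi$ is a DG quasi-equivalence carrying the compact generator $A\in \rD(A)$ to $T\in \rD(B)$, the object $T$ is a compact generator of $\rD(B)$, which yields conditions (1) and (2). Condition (3) follows from the fact that $\Phi$ induces quasi-isomorphisms on Hom-complexes, giving
$$
\Hom_{\rD(B)}(T,T[i])\cong \Hom_{\rD(A)}(A,A[i])\cong H^i(A)
$$
for every $i\in \mathbb{Z}$. Combined with Proposition \ref{prop: another DG algebra with the same derived category and a bimodule}, this gives the desired $A$-$B$ bimodule $T$ inducing the triangulated equivalence $\rD(A)\xrightarrow{\sim}\rD(B)$ as a derived tensor product.

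The main obstacle is the invocation of Lunts--Orlov uniqueness, which genuinely fails for general DG algebras, as indicated by the Rizzardo--Van den Bergh counterexample referenced earlier in the paper. The projective hypothesis on $X$ is essential precisely because this is exactly what makes the DG enhancement of $\rD(X)$ unique in $\text{Hqe}$; without it one cannot hope to upgrade an arbitrary triangulated equivalence $\rD(A)\simeq \rD(B)$ to a bimodule.
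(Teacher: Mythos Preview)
Your proof is correct and follows essentially the same route as the paper: identify $\text{h-proj}(A)$ and $\text{h-proj}(B)$ as DG enhancements of $\rD(X)$, invoke Lunts--Orlov uniqueness to obtain a quasi-functor between them, and then use To\"en's description of morphisms in $\text{Hqe}$ to extract an honest $A$-$B$ bimodule. The paper cites To\"en's Corollary~7.6 (continuous quasi-functors between module categories are represented by bimodules over the generators) directly, whereas you instead set $T=\Phi(A)$ and verify the hypotheses of Lemma~\ref{lemma: when a bimodules gives and equivalence} by hand; both are valid ways to finish.

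Two minor remarks. First, Lunts--Orlov only asserts the \emph{existence} of a quasi-functor inducing an equivalence on $H^0$, not that it lifts a prescribed triangulated equivalence $\rD(A)\simeq\rD(X)\simeq\rD(B)$; fortunately your argument only uses existence, so this is harmless. Second, the closing appeal to Proposition~\ref{prop: another DG algebra with the same derived category and a bimodule} is unnecessary: once you have checked the three conditions of Lemma~\ref{lemma: when a bimodules gives and equivalence}, you are done, and that proposition plays no role here.
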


The proof of Proposition \ref{prop: uniqueness of dga for enhancement of a variety} involves the following concepts and results.

First for a DG algebra (or more generally, a DG category) $A$, as usual we denote the DG category of (right) DG A-modules by $\mathcal{M}_{dg}(A)$ and we use Mod-$A$ to denote the (ordinary) category $Z^0(\mathcal{M}_{dg}(A))$. It is well-known that Mod-$A$ has a projective model structure where weak equivalences are quasi-isomorphisms of chain complexes and  fibrations are degreewise epimorphisms, see \cite{toen2007homotopy} Definition 3.1 or \cite{keller2006differential} Theorem 3.2.

\begin{lemma}\label{lemma: cofibrant and fibrant objects in Mod-A}
For a DG algebra $A$ over a field $k$, the full DG subcategory of $\mathcal{M}_{dg}(A)$ consisting of fibrant and cofibrant objects coincides with h-proj$(A)$.
\end{lemma}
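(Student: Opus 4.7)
The plan is to identify the fibrant and the cofibrant objects separately, and then combine. For the fibrant objects, the fibrations in the projective model structure are the degreewise epimorphisms and the terminal object is the zero module, so the structure map $M\to 0$ is always a fibration; hence every DG $A$-module is fibrant. Consequently, the full DG subcategory of fibrant-and-cofibrant objects reduces to the full DG subcategory of cofibrant objects, and it suffices to show that the cofibrant objects of $\text{Mod-}A$ are exactly the h-projective DG modules.

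For the implication ``cofibrant implies h-projective'', the plan is to invoke the pushout-product (SM7) axiom, which holds for the projective model structure on $\text{Mod-}A$ regarded as a DG-enriched model category over $\Ch(k)$; this compatibility is standard (see \cite{toen2007homotopy} or \cite{keller2006differential}). Applied to the cofibration $0\to P$ and the trivial fibration $M\to 0$ (which is trivial precisely when $M$ is acyclic), this axiom produces a trivial fibration $\Hom_{\mathcal{M}_{dg}(A)}(P,M)\to 0$ in $\Ch(k)$, so the complex $\Hom_{\mathcal{M}_{dg}(A)}(P,M)$ is acyclic. This is exactly the defining property of h-projectivity for $P$.

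For the converse ``h-projective implies cofibrant'', given an h-projective $P$ I would choose a semi-free resolution $\varepsilon\colon Q\to P$, which is a trivial fibration with $Q$ cofibrant (hence h-projective by the previous paragraph) and graded-free as a graded $A$-module by construction of the filtration defining semi-freeness. The graded-freeness of $Q$ makes $\Hom_{\mathcal{M}_{dg}(A)}(P,Q)\to\Hom_{\mathcal{M}_{dg}(A)}(P,P)$ a degreewise epimorphism, while h-projectivity of $P$ applied to the acyclic module $\ker\varepsilon$ makes it a quasi-isomorphism; it is therefore a trivial fibration in $\Ch(k)$, so the identity $1_P$ lifts strictly along it to a section $s\colon P\to Q$ of $\varepsilon$. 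This exhibits $P$ as a retract of the cofibrant $Q$, and hence $P$ is itself cofibrant.

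The main obstacle in this plan is the last step: upgrading what would only be a homotopy inverse of $\varepsilon$ (available from Whitehead's theorem for h-projective modules) to a strict section. The DG-enriched structure of the projective model structure is essential here, since without the graded-freeness of $Q$ and the enriched lifting property one cannot promote the inverse in the homotopy category to a morphism realising $P$ as a retract of $Q$ in $Z^0(\mathcal{M}_{dg}(A))$.
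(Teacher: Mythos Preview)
Your treatment of the fibrant objects and of the implication ``cofibrant $\Rightarrow$ h-projective'' is fine; the SM7 argument is a clean way to do the latter, and the paper itself does not spell out any argument here beyond citing \cite{barthel2014six} Proposition~1.7.

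The gap is in the converse direction. You assert that the graded-freeness of $Q$ makes
\[
\varepsilon_*\colon \Hom_{\mathcal{M}_{dg}(A)}(P,Q)\longrightarrow \Hom_{\mathcal{M}_{dg}(A)}(P,P)
\]
a degreewise epimorphism, but this is backwards: graded-freeness of $Q$ controls the exactness of $\Hom(Q,-)$, not of $\Hom(-,Q)$. For $\varepsilon_*$ to be degreewise surjective one needs the underlying graded $A^\natural$-module $P^\natural$ to be projective, and you have not deduced this from h-projectivity. Without it, the h-projectivity of $P$ applied to $\mathrm{Cone}(\varepsilon)$ only shows that $\varepsilon_*$ is a quasi-isomorphism in $\Ch(k)$; hence you obtain $s\colon P\to Q$ with $\varepsilon\circ s$ merely \emph{homotopic} to $1_P$, i.e.\ $P$ is a homotopy retract of $Q$. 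A homotopy retract of a cofibrant object need not be cofibrant, so the argument does not close as written. You yourself flag exactly this upgrading problem in your final paragraph, but the mechanism you propose for solving it (graded-freeness of $Q$) is not the right one.

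One route to repair the argument is to show first that every h-projective $P$ has $P^\natural$ projective over $A^\natural$: test the lifting property against the acyclic ``disc'' modules and unravel what acyclicity of $\Hom_{\mathcal{M}_{dg}(A)}(P,-)$ on these says about graded lifting. With that in hand your retract argument goes through. The paper does not carry out any of this, deferring the whole identification of cofibrant objects with h-projectives to \cite{barthel2014six}.
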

\begin{proof}
Since $k$ is a field, it is easy to see that any DG-modules over $A$ is fibrant. Then we could check by definition that cofibrant objects are exactly h-projective modules. See \cite{barthel2014six} Proposition 1.7.
\end{proof}

It is clear that the homotopy category $H^0(\text{h-proj}(A))$ is equivalent to $\rD(A)$. Now let $A$ and $B$ be as in Proposition \ref{prop: uniqueness of dga for enhancement of a variety} and we know that both h-proj$(A)$ and h-proj$(B)$ give DG enhancements of $\rD(X)$. Now we quote the following important fact about DG enhancement.

\begin{thm}\label{thm: uniqueness of dg-enhancement}[\cite{lunts2010uniqueness} Corollary 7.8]
Let $X$ be a quasi-projective scheme and $\rD(X)$ be the derived category of complexes of quasi-coherent sheaves. Then $\rD(X)$ has a unique DG enhancement, i.e. for two DG enhancement $\mathcal{C}$ and $\mathcal{D}$ there exists a quasi-functor  $\phi: \mathcal{C} \to \mathcal{D}$ which induces an equivalence between their homotopy categories.
\end{thm}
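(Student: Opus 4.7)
The plan is to reduce the uniqueness question to a statement about DG enhancements of the derived category of a single DG algebra, by exploiting a compact generator of $\rD(X)$. By Proposition \ref{prop: compact objects and compact generators in D(X)}, $\rD(X)$ admits a compact generator $E$, and under the given equivalences $H^0(\mathcal{C})\simeq \rD(X)\simeq H^0(\mathcal{D})$ one can choose lifts $\widetilde{E}_{\mathcal{C}}\in \mathcal{C}$ and $\widetilde{E}_{\mathcal{D}}\in \mathcal{D}$. The strategy is then to compare $\mathcal{C}$ and $\mathcal{D}$ via the endomorphism DG algebras of these lifts, realising both enhancements as DG categories of modules over a common DG algebra up to quasi-isomorphism.

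Concretely, first I would replace $\widetilde{E}_{\mathcal{C}}$ and $\widetilde{E}_{\mathcal{D}}$ by fibrant-cofibrant representatives, which exist by Lemma \ref{lemma: cofibrant and fibrant objects in Mod-A} and the projective model structure, and form the DG algebras $A_{\mathcal{C}}:=\End_{\mathcal{C}}(\widetilde{E}_{\mathcal{C}})$ and $A_{\mathcal{D}}:=\End_{\mathcal{D}}(\widetilde{E}_{\mathcal{D}})$. Both are quasi-isomorphic to $R\Hom_X(E,E)$ because they compute the same graded $\Hom$-spaces under the identifications with $\rD(X)$. Next, applying Proposition \ref{prop: equivalence between dg category and module cateogy of dg algebra} to the compact generator $\widetilde{E}_{\mathcal{C}}$ inside a suitable cocomplete enlargement of $\mathcal{C}$, and symmetrically for $\mathcal{D}$, would realise each of $\mathcal{C}$ and $\mathcal{D}$ as a DG enhancement of $\rD(A_{\mathcal{C}})$, respectively $\rD(A_{\mathcal{D}})$. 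A chosen quasi-isomorphism $A_{\mathcal{C}}\simeq A_{\mathcal{D}}$ then transports one enhancement to the other and yields a quasi-functor $\mathcal{C}\to \mathcal{D}$ inducing an equivalence of homotopy categories.

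The hard part is the coherence of the comparison between $A_{\mathcal{C}}$ and $A_{\mathcal{D}}$: the lifts $\widetilde{E}_{\mathcal{C}}$ and $\widetilde{E}_{\mathcal{D}}$ are a priori unrelated beyond an abstract isomorphism of their images in $\rD(X)$, and one must upgrade this set-theoretic datum to a canonical enough quasi-isomorphism of DG endomorphism algebras so that the module-theoretic reconstruction step actually glues. This is precisely where quasi-projectivity of $X$ enters: through Orlov's representability theorem, which controls fully faithful exact functors between derived categories of quasi-projective varieties by Fourier--Mukai kernels, and through a delicate obstruction-theoretic argument in the model category of small DG categories. A fully self-contained argument at this level would reproduce the lengthy machinery of \emph{op.~cit.}, so in practice I would defer the detailed verification to \cite{lunts2010uniqueness} Corollary 7.8 and use it here as a black box.
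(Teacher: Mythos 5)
The paper does not actually prove this statement: it is quoted as an external result, and the ``proof'' in the text is simply a pointer to \cite{lunts2010uniqueness} Corollary 7.8. Your final fallback --- using that corollary as a black box --- therefore coincides with what the paper does, and at that level there is nothing to object to.

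However, the sketch you give before deferring contains a genuine error, and it is worth naming because it is exactly the subtlety this theorem exists to overcome. You assert that $A_{\mathcal{C}}=\End_{\mathcal{C}}(\widetilde{E}_{\mathcal{C}})$ and $A_{\mathcal{D}}=\End_{\mathcal{D}}(\widetilde{E}_{\mathcal{D}})$ ``are quasi-isomorphic to $R\Hom_X(E,E)$ because they compute the same graded $\Hom$-spaces.'' Having isomorphic cohomology does not imply that two DG algebras are quasi-isomorphic (connected by a zig-zag of quasi-isomorphisms): the higher $A_\infty$-structure, e.g.\ Massey products, obstructs this, and the paper itself emphasizes the point twice --- in the remark following Proposition \ref{prop: another DG algebra with the same derived category and a bimodule} (``a priori there is no quasi-isomorphism between $A$ and $\widetilde{A}$'') and in the remark on the Rizzardo--Van den Bergh examples showing that triangulated equivalence of $\rD(A)$ and $\rD(B)$ does not yield DG Morita equivalence. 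So the step you later describe as ``the hard part'' is not merely a coherence issue on top of an existing quasi-isomorphism; the existence of any such quasi-isomorphism is the entire content of the theorem, and your sketch assumes it. Your attribution of the role of quasi-projectivity is also off: the Lunts--Orlov argument does not run through Orlov's Fourier--Mukai representability theorem (which concerns smooth projective varieties), but through the existence of a set of compact generators (restrictions of powers of an ample line bundle, or structure sheaves of affine opens) with vanishing negative self-Exts, to which their abstract uniqueness criterion applies. As a self-contained proof the proposal therefore does not work; as a citation it is fine, but the surrounding narrative should not be mistaken for an outline of the actual argument.
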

\begin{proof}
See \cite{lunts2010uniqueness} Corollary 7.8.
\end{proof}

\begin{proof}[The proof of Proposition \ref{prop: uniqueness of dga for enhancement of a variety}]
Since both h-proj$(A)$ and h-proj$(B)$ give DG enhancements of $\rD(X)$, we obtain a quasi-functor $\phi: \text{h-proj}(A) \to \text{h-proj}(B)$ by Theorem \ref{thm: uniqueness of dg-enhancement}. A priori $\phi$ is given by a $\text{h-proj}(A)$-$\text{h-proj}(B)$ DG bimodule. But since $\phi$ induces an equivalence between homotopy categories, it is  continuous in the sense of \cite{toen2007homotopy}  Section 7. Therefore by \cite{toen2007homotopy} Corollary 7.6, $\phi$ is given by an $A$-$B$ bimodule $T$.
\end{proof}

The following result, which generalizes Proposition \ref{prop: scalar extension is compatible with base change of schemes}, shows that the scalar extension in Definition \ref{defi: scalar extension of derived category of dga} is compatible with the base change in algebraic geometry.
\begin{coro}\label{coro: scalar extension of any dga of a variety give the base change}
Let $X$ be a projective variety over a field $k$ and $A$ be any DG algebra over $k$ such that $\rD(X)\simeq \rD(A)$. Then for a field extension $k^{\prime}/k$ we have
$$
\rD(X_{k^{\prime}})\overset{\sim}{\to}\rD(A_{k^{\prime}}).
$$
\end{coro}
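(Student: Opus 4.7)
The plan is to reduce the statement to the combination of Proposition \ref{prop: scalar extension is compatible with base change of schemes}, Proposition \ref{prop: uniqueness of dga for enhancement of a variety}, and Proposition \ref{prop: dg morita equivalence is preserved under scalar extension}, which together form a complete toolkit for this corollary.

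First, I would fix a compact generator $\mathcal{E}$ of $\rD(X)$ and let $A_0 := R\Hom_X(\mathcal{E},\mathcal{E})$ be the ``canonical'' DG algebra attached to $X$ via this generator. By Proposition \ref{prop: scalar extension is compatible with base change of schemes} applied to $A_0$, we already have a triangulated equivalence
$$
\rD(X_{k^{\prime}})\simeq \rD((A_0)_{k^{\prime}}).
$$
So the only thing left to establish is a triangulated equivalence $\rD(A_{k^{\prime}})\simeq \rD((A_0)_{k^{\prime}})$ for an arbitrary DG algebra $A$ with $\rD(A)\simeq \rD(X)\simeq \rD(A_0)$.

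For this step, I would invoke Proposition \ref{prop: uniqueness of dga for enhancement of a variety}: since $\rD(X)$ is the derived category of a projective (hence quasi-projective) variety and both $A$ and $A_0$ are DG algebras whose derived categories are equivalent to $\rD(X)$, there exists an $A$-$A_0$ bimodule $T$ which gives a DG Morita equivalence $(-)\otimes^{\mathbf L}_A T : \rD(A) \xrightarrow{\sim} \rD(A_0)$. Then Proposition \ref{prop: dg morita equivalence is preserved under scalar extension} tells us that the scalar-extended bimodule $T_{k^{\prime}}$ gives a DG Morita equivalence
$$
(-)\otimes^{\mathbf L}_{A_{k^{\prime}}} T_{k^{\prime}} : \rD(A_{k^{\prime}})\xrightarrow{\sim}\rD((A_0)_{k^{\prime}}).
$$
Composing this with the equivalence above yields the desired $\rD(X_{k^{\prime}})\simeq \rD(A_{k^{\prime}})$.

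The main (and essentially only) real obstacle in the argument is invoking the uniqueness of DG enhancement to upgrade an abstract triangulated equivalence $\rD(A)\simeq \rD(A_0)$ to a DG Morita equivalence given by an honest bimodule; without this step the equivalence need not be compatible with any base change, as highlighted by the Rizzardo--Van den Bergh example cited earlier. Fortunately Proposition \ref{prop: uniqueness of dga for enhancement of a variety} (which rests on \cite{lunts2010uniqueness} Corollary 7.8 together with \cite{toen2007homotopy} Corollary 7.6) handles precisely this situation for projective varieties, so the corollary follows with no further technical input beyond chaining the three propositions together.
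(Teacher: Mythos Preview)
Your proposal is correct and follows exactly the route the paper intends: the paper's one-line proof cites only Proposition~\ref{prop: scalar extension is compatible with base change of schemes} and Proposition~\ref{prop: uniqueness of dga for enhancement of a variety}, but the paragraph immediately preceding Proposition~\ref{prop: uniqueness of dga for enhancement of a variety} makes clear that Proposition~\ref{prop: dg morita equivalence is preserved under scalar extension} is the bridge being applied, so your explicit three-step chain $\rD(X_{k'})\simeq \rD((A_0)_{k'})\simeq \rD(A_{k'})$ is precisely the intended argument spelled out in full.
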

\begin{proof}
It is a direct corollary of Proposition \ref{prop: scalar extension is compatible with base change of schemes} and Proposition \ref{prop: uniqueness of dga for enhancement of a variety}.
\end{proof}

\subsection{Scalar extensions of categorical resolutions}\label{section: Scalar extensions of categorical resolutions}
In this subsection we discuss scalar extensions of categorical resolutions. First we recall our definition of algebraic categorical resolution, see Definition \ref{defi: categorical resolution our form} above.

\begin{defi}\label{defi: categorical resolution our form rephrase}
Let $X$ be a projective variety over a field $k$. Then an algebraic categorical resolution of $X$ is a triple $(A,B,T)$ where $A$ is a DG algebra such that $\rD(X) \simeq \rD(A)$, $B$ is a smooth DG algebra and $T$ is an
$A$-$B$ bimodule such that
\begin{enumerate}
\item $H^i(A)\to \Hom_{\rD(B)}(T,T[i])$ is an isomorphism for any $i\in \mathbb{Z}$;
\item $T$ defines a compact object in $\rD(B)$;
\item $\bigoplus_i \Hom_{\rD(B)}(T,B[i])$ is finite dimensional.
\end{enumerate}
\end{defi}

Now we define the  scalar extension of a categorical resolution.

\begin{defi}\label{defi: scalar extension of categorical resolutions}
Let $X$ be a projective variety over a field $k$ and $(A,B,T)$ be an algebraic categorical resolution. Let $k^{\prime}/k$ be a field extension. Then the scalar extension of $(A,B,T)$ is given by $(A_{k^{\prime}}, B_{k^{\prime}}, T_{k^{\prime}})$.
\end{defi}

We need to prove that $(A_{k^{\prime}}, B_{k^{\prime}},T_{k^{\prime}})$ in Definition \ref{defi: scalar extension of categorical resolutions} really gives an algebraic categorical resolution of $X_{k^{\prime}}$.

\begin{prop}\label{prop: scalar extension of a categorical resolution is a categocial resolution}
Let $X$ be a projective variety over a field $k$. If  $(A,B,T)$ is an algebraic categorical resolution of $X$, then $(A_{k^{\prime}},B_{k^{\prime}},T_{k^{\prime}})$ is an algebraic categorical resolution of $X_{k^{\prime}}$.
\end{prop}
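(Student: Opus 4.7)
The plan is to verify each of the three items of Definition \ref{defi: categorical resolution our form rephrase} for the triple $(A_{k^{\prime}}, B_{k^{\prime}}, T_{k^{\prime}})$, together with the ambient requirements that $\rD(X_{k^{\prime}}) \simeq \rD(A_{k^{\prime}})$ and that $B_{k^{\prime}}$ is a smooth DG algebra. The derived equivalence $\rD(X_{k^{\prime}}) \simeq \rD(A_{k^{\prime}})$ is exactly the content of Corollary \ref{coro: scalar extension of any dga of a variety give the base change}, so nothing new needs to be proved there.

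For smoothness of $B_{k^{\prime}}$, I would argue structurally from Definition \ref{defi: smooth DG cat and smooth triangulated cat}: $B$ is perfect as a $B^{op}\otimes_k B$-bimodule, meaning it is a direct summand of an object built from representable bimodules by finitely many shifts and cones. The scalar extension functor $-\otimes_k k^{\prime}$ is exact and commutes with shifts, cones, direct summands, and finite direct sums, and it identifies $(B^{op}\otimes_k B)\otimes_k k^{\prime}$ with $B_{k^{\prime}}^{op}\otimes_{k^{\prime}} B_{k^{\prime}}$. Applying it to such a presentation of the diagonal bimodule $B$ produces an analogous presentation of $B_{k^{\prime}}$, witnessing smoothness.

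For condition (2), since $T$ is compact in $\rD(B)$, it lies in $\Perf(B)$ by Lemma \ref{lemma: characterization of compact objects}; scalar extension preserves the class of objects obtained from $B$ by finite sums, shifts, cones, and summands, so $T_{k^{\prime}}\in \Perf(B_{k^{\prime}})$, hence is compact. With $T_{k^{\prime}}$ perfect, I can apply Lemma \ref{lemma: scalar extension of morphisms in derived category}. For condition (1), take $E=F=T$ to get
\[
\Hom_{\rD(B)}(T,T[i])\otimes_k k^{\prime} \xrightarrow{\sim} \Hom_{\rD(B_{k^{\prime}})}(T_{k^{\prime}},T_{k^{\prime}}[i]);
\]
combining with the hypothesis $H^i(A)\cong \Hom_{\rD(B)}(T,T[i])$ and the flatness isomorphism $H^i(A_{k^{\prime}})\cong H^i(A)\otimes_k k^{\prime}$ gives the desired isomorphism. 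For condition (3), take $E=T$ and $F=B$ in the same lemma and use that $\otimes_k k^{\prime}$ commutes with direct sums to obtain
\[
\bigoplus_{i}\Hom_{\rD(B_{k^{\prime}})}(T_{k^{\prime}},B_{k^{\prime}}[i]) \cong \Bigl(\bigoplus_{i}\Hom_{\rD(B)}(T,B[i])\Bigr)\otimes_k k^{\prime},
\]
which is finite dimensional over $k^{\prime}$ because the sum on the right is finite dimensional over $k$ by hypothesis.

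The genuine subtlety in this argument is the smoothness step: one must know that perfectness of a diagonal bimodule really is preserved under scalar extension, i.e., that a presentation of $B$ as a summand of a finite iterated cone of representables gives a bona fide such presentation of $B_{k^{\prime}}$ over the base-changed enveloping algebra. Everything else reduces to bookkeeping with Lemma \ref{lemma: scalar extension of morphisms in derived category} and the fact that a field extension is flat.
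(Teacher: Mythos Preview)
Your proof is correct and follows essentially the same approach as the paper: invoking Corollary \ref{coro: scalar extension of any dga of a variety give the base change} for $\rD(X_{k'})\simeq\rD(A_{k'})$, noting that smoothness of $B$ implies smoothness of $B_{k'}$, obtaining Condition (2) from Lemma \ref{lemma: characterization of compact objects}, and deducing Conditions (1) and (3) from Lemma \ref{lemma: scalar extension of morphisms in derived category}. You have simply spelled out in more detail what the paper states tersely (especially the smoothness step and the explicit instantiations of Lemma \ref{lemma: scalar extension of morphisms in derived category}).
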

\begin{proof}
First of all, we know that $B_{k^{\prime}}$ is smooth since $B$ is smooth. Moreover by Corollary \ref{coro: scalar extension of any dga of a variety give the base change} we know that $\rD(X_{k^{\prime}})\simeq \rD(A_{k^{\prime}})$.

 Then we need to show that $(A_{k^{\prime}},B_{k^{\prime}},T_{k^{\prime}})$ satisfies Condition (1), (2), (3) in Definition \ref{defi: categorical resolution our form rephrase}.  Condition (2) follows from Lemma \ref{lemma: characterization of compact objects} and then Condition (1) and (3) are consequences of Lemma \ref{lemma: scalar extension of morphisms in derived category}.
\end{proof}

\section{Application: full exceptional collections of categorical resolutions}\label{section: full exceptional collections of categorical resolutions}
In \cite{wei2016full} the following results has been proved.

\begin{thm}\label{thm: full exceptional collection alg closed field}[\cite{wei2016full} Theorem 4.9]
Let $X$ be a projective curve over an algebraically closed field $k$. Let $(\mathscr{T},\pi^*,\pi_*)$ be a categorical resolution of $X$ (in the sense of Definition \ref{defi: categorical resolution}). If the geometric genus of $X$ is $\geq 1$, then $\mathscr{T}^c$ cannot have a full exceptional collection.

Actually $X$ has a categorical resolution which admits a full exceptional collection if and only if the geometric genus of $X$ is $0$.
\end{thm}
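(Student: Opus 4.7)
The theorem has two directions. For the easier direction --- that geometric genus $0$ implies the existence of a categorical resolution with a full exceptional collection --- the plan is explicit construction via the normalization $\nu : \tilde{X} \to X$. Since $p_g(X)=0$, every component of $\tilde{X}$ is isomorphic to $\mathbb{P}^1$, which carries the tilting bundle $\mathcal{O}\oplus \mathcal{O}(1)$ and a full exceptional collection. One then applies Kuznetsov's gluing construction of categorical resolutions from \cite{kuznetsov2014categorical}: $\mathscr{T}$ is obtained by combining $\rD^b(\tilde{X})$ with finite-dimensional local data at each singular point of $X$, and the result is $\mathscr{T}^c\simeq \rD^b(\Lambda\text{-mod})$ for some finite-dimensional $k$-algebra $\Lambda$ of finite global dimension. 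Combining the exceptional collection on each $\mathbb{P}^1$ with the simple modules of the local algebras produces a full exceptional collection in $\mathscr{T}^c$.

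For the harder direction, assume $p_g(X)\geq 1$ and, for contradiction, that $\mathscr{T}^c$ carries a full exceptional collection $(E_1,\dots,E_n)$. Then $K_0(\mathscr{T}^c)\cong \mathbb{Z}^n$ and the Hochschild homology $HH_*(\mathscr{T}^c)$ is concentrated in degree $0$. The categorical resolution property supplies a fully faithful $\pi^* : \rD(X)\hookrightarrow \mathscr{T}$ whose image is closed under direct summands (because $\pi_*\pi^*\simeq \mathrm{id}$); since $\pi^*$ preserves compactness, this restricts to a thick embedding $\pi^*(\rD^{\perf}(X))\hookrightarrow \mathscr{T}^c$. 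First treat $X$ smooth: condition (3) of Definition \ref{defi: categorical resolution} reads $\pi_*(\mathscr{T}^c)\subset \rD^b(\coh X)=\rD^{\perf}(X)$, so $\pi_*$ becomes a right adjoint of $\pi^*$ within the compact subcategory, making the embedding admissible. This yields a split injection $K_0(X)\hookrightarrow K_0(\mathscr{T}^c)=\mathbb{Z}^n$. But for a smooth projective curve of genus $\geq 1$ over an algebraically closed field we have $K_0(X)\supset \Pic^0(X)=J(X)(k)$, and the $k$-points of the Jacobian form a divisible abelian group (having full $n$-torsion $(\mathbb{Z}/n)^{2g}$ for every $n$ coprime to $\mathrm{char}\, k$); such a group cannot embed in $\mathbb{Z}^n$, giving the desired contradiction.

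For singular $X$ with $p_g(X)\geq 1$ the plan is to reduce to the smooth case via the normalization. The K-theoretic route replaces the categorical resolution of $X$ by one of $\tilde{X}$, using that any categorical resolution of $X$ must contain $\rD^b(\tilde{X})$ as a semi-orthogonal summand (by comparison with Kuznetsov's canonical construction from the normalization), and then applies the smooth case to the component of $\tilde{X}$ of positive genus. A parallel Hochschild-theoretic route uses $HH_*(\mathscr{T}^c)=k^n$ concentrated in degree $0$, and shows that $HH_*(\tilde{X})$ is a direct summand of $HH_*(\mathscr{T}^c)$; Hochschild--Kostant--Rosenberg gives $HH_1(\tilde{X})=\bigoplus_i H^0(\tilde{X}_i,\Omega^1)$, which is nonzero as soon as some component has genus $\geq 1$, contradicting the concentration in degree $0$.

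The main obstacle is justifying the reduction to the normalization: the bare Definition \ref{defi: categorical resolution} does not mention $\tilde{X}$, so one has to argue that $\rD^b(\tilde{X})$ (or at least its Hochschild invariants) must inject into $\mathscr{T}^c$. The most robust way is to compare the given $\mathscr{T}$ against the canonical categorical resolution of $X$ built from the normalization in \cite{kuznetsov2014categorical}, combined with a uniqueness of DG-enhancement statement in the spirit of Theorem \ref{thm: uniqueness of dg-enhancement}, which transfers the presence of a semi-orthogonal $\rD^b(\tilde{X})$-summand from the canonical resolution to any other one. All other steps (the thick closure of $\pi^*(\rD^{\perf}(X))$, the K-theoretic description of curves in terms of the Jacobian, and the HKR identification of $HH_*(\tilde{X})$) are standard once this reduction is in place.
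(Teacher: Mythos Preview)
This theorem is not actually proved in the present paper: it is quoted verbatim from \cite{wei2016full} and then used as a black box in the proof of Theorem~\ref{thm: full exceptional collection non-alg closed field}. So there is no ``paper's own proof'' to compare against beyond the description given in the Remark immediately following the statement, which says that the argument in \cite{wei2016full} proceeds via the Grothendieck group together with the fact that $\Pic(X)$ itself (for the possibly singular curve $X$) is not finitely generated when $p_g(X)\geq 1$ over an algebraically closed field.

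Your treatment of the genus-$0$ direction and of the smooth case when $p_g(X)\geq 1$ is in line with that description: for smooth $X$ the admissibility argument and the embedding of the Jacobian into $K_0(X)$ are exactly the right ingredients.

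The genuine gap is in your singular case. You assert that any categorical resolution $\mathscr{T}$ of $X$ must contain $\rD^b(\tilde{X})$ as a semi-orthogonal summand, and you propose to justify this by comparing $\mathscr{T}$ with Kuznetsov's canonical resolution built from the normalization, invoking uniqueness of DG-enhancements. This does not work: uniqueness of enhancement (Theorem~\ref{thm: uniqueness of dg-enhancement}) compares two enhancements of the \emph{same} triangulated category, whereas two categorical resolutions of $X$ are in general non-equivalent triangulated categories (as the paper itself notes, categorical resolutions are not unique). Nothing in Definition~\ref{defi: categorical resolution} forces $\rD^b(\tilde{X})$, or its Hochschild or $K$-theoretic invariants, to sit inside an arbitrary $\mathscr{T}^c$. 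Both your $K$-theoretic and your Hochschild routes rest on this unjustified embedding, so neither closes the argument.

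By contrast, the approach indicated by the paper's Remark avoids the normalization entirely: over an algebraically closed field the generalized Jacobian $\Pic^0(X)$ of the singular curve already surjects onto $\mathrm{Jac}(\tilde{X})(k)$ and hence is not finitely generated once $p_g(X)\geq 1$, and one argues directly with line bundles on $X$ (which are perfect) through $\pi^*$ and the Grothendieck group of $\mathscr{T}^c$. If you want a self-contained proof, you should look for an argument along those lines rather than trying to transport $\rD^b(\tilde{X})$ into $\mathscr{T}^c$.
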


\begin{thm}\label{thm: tilting object alg closed field}[\cite{wei2016full} Theorem 4.10]
Let $X$ be a projective curve over an algebraically closed field $k$ of geometric genus $\geq 1$   and  $(\mathscr{T},\pi^*,\pi_*)$ be a categorical resolution of $X$. Then $\mathscr{T}^c$ cannot have a tilting object, moreover there cannot be a finite dimensional $k$-algebra $\Lambda$ of finite global dimension such that
$$
\mathscr{T}^c\simeq \rD^b(\Lambda\text{-mod}).
$$
\end{thm}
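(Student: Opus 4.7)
The ``moreover'' clause is a formal strengthening of the first assertion: if $\mathscr{T}^c\simeq\rD^b(\Lambda\text{-mod})$ with $\Lambda$ finite-dimensional of finite global dimension, then the regular module ${}_\Lambda\Lambda$ is itself a tilting object of $\mathscr{T}^c$ (it classically generates, and $\Hom(\Lambda,\Lambda[n])=0$ for $n\neq 0$ because $\Lambda$ is projective over itself). I therefore reduce to showing that $\mathscr{T}^c$ cannot have a tilting object and argue by contradiction: suppose $T\in\mathscr{T}^c$ is tilting.

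Since $T$ classically generates $\mathscr{T}^c$ it is a compact generator of $\mathscr{T}$, so Proposition~\ref{prop: equivalence between dg category and module cateogy of dg algebra} produces an equivalence $\mathscr{T}\simeq\rD(A)$ with $A=R\End_{\mathscr{T}}(T)$. The tilting vanishing forces $A$ to be quasi-isomorphic to the ordinary algebra $A_0=\End_{\mathscr{T}}(T)$. Smoothness of $\mathscr{T}$ transfers to $A_0$, which for a finite-dimensional ordinary algebra is equivalent to finite global dimension, and Condition~(3) of Definition~\ref{defi: categorical resolution}, combined with the fact that $\pi^*\mathcal{O}_X$ is obtained from $T$ by finitely many triangulated operations, forces $A_0$ to be finite-dimensional over $k$. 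Hence $\mathscr{T}^c\simeq\rD^b(A_0\text{-mod})$ is Hom-finite, $K_0(\mathscr{T}^c)$ is free abelian of finite rank $n$, and the Euler pairing
\[
\chi_{\mathscr{T}}(E,F):=\sum_{m\in\mathbb{Z}}(-1)^m\dim\Hom_{\mathscr{T}}(E,F[m])
\]
is a non-degenerate integer bilinear form on $K_0(\mathscr{T}^c)$ (the Cartan matrix of $A_0$ is invertible over $\mathbb{Z}$).

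The key geometric input is an Euler-form computation. From $\pi_*\pi^*\simeq\mathrm{id}$ and adjunction, $\chi_{\mathscr{T}}(\pi^*\mathcal{E},G)=\chi_X(\mathcal{E},\pi_*G)$ for every $\mathcal{E}\in\rD^{\text{perf}}(X)$ and $G\in\mathscr{T}^c$. The geometric-genus hypothesis forces the arithmetic genus $p_a(X)\geq 1$, so $\Pic^0(X)$ is a positive-dimensional algebraic group containing line bundles $L\not\simeq\mathcal{O}_X$. By Riemann--Roch on the projective curve $X$, tensoring by any $L\in\Pic^0(X)$ preserves Euler characteristics of coherent complexes, and hence $\chi_X(L,\pi_*G)=\chi_X(\mathcal{O}_X,\pi_*G)$ for every $G\in\mathscr{T}^c$. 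Combining these identities, $[\pi^*L]-[\pi^*\mathcal{O}_X]\in K_0(\mathscr{T}^c)$ pairs trivially with every class in $K_0(\mathscr{T}^c)$, so by non-degeneracy of $\chi_{\mathscr{T}}$ one obtains $[\pi^*L]=[\pi^*\mathcal{O}_X]$ in $K_0(\mathscr{T}^c)$ for every $L\in\Pic^0(X)$.

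The endgame I would pursue reduces to the smooth case. On a smooth projective curve $\tilde X$ of genus $g\geq 1$, no tilting object exists in $\rD^b(\coh\tilde X)$: any such $T'$ would satisfy $\chi_{\tilde X}(T',T')=\mathrm{rk}(T')^2(1-g)\leq 0$ by Riemann--Roch, contradicting $\dim\End(T')\geq 1$. The hard part is to convert the $K_0$-collapse of the previous paragraph (which encodes the Jacobian of the normalization $\tilde X$ inside a finite-rank lattice) into an honest tilting-type contradiction on $\tilde X$. Concretely, I would use the normalization $\nu\colon\tilde X\to X$ together with the family $\{\pi^*L\}_{L\in\Pic^0(X)}$ of pairwise non-isomorphic bricks with identical $K_0$-class to manufacture a numerically tilting-like object on $\tilde X$ (for example, by tracking the Mukai/Chern character of the composition $\pi^*\circ \nu_*$), and then invoke the smooth-curve Riemann--Roch bound $\chi_{\tilde X}(T',T')\leq 0$ to derive the contradiction.
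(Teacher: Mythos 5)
The paper itself offers no proof of this statement: Theorem \ref{thm: tilting object alg closed field} is quoted from \cite{wei2016full} (Theorem 4.10), and the remark following Theorem \ref{thm: full exceptional collection alg closed field} records that the cited argument runs through the Picard group and the Grothendieck group, using that $\Pic(X)$ is not finitely generated for a curve of geometric genus $\geq 1$ over an algebraically closed field. Measured against that, your proposal has two genuine gaps. First, the finite-dimensionality of $A_0=\End_{\mathscr{T}}(T)$ does not follow from Condition (3) of Definition \ref{defi: categorical resolution} in the way you assert: that condition (via Remark \ref{rmk: coherent is dual to perfect}) bounds $\bigoplus_n\Hom_{\mathscr{T}}(\pi^*\mathcal{E},F[n])$ for $\mathcal{E}\in\rD^{\text{perf}}(X)$ and $F\in\mathscr{T}^c$, so to bound $\End(T)$ you would need $T$ to lie in the thick subcategory generated by $\pi^*(\rD^{\text{perf}}(X))$, whereas your generation statement points the other way ($T$ classically generates $\pi^*\mathcal{O}_X$, not conversely). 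Definition \ref{defi: categorical resolution} does not by itself force $\mathscr{T}^c$ to be Hom-finite (one may adjoin to $\mathscr{T}$ an orthogonal smooth summand such as $\rD(k[t])$ without disturbing (1)--(3)), so Hom-finiteness must either be part of the definition of tilting object, as in \cite{wei2016full}, or be argued separately. Second, and more seriously, your final paragraph is a plan rather than a proof: the passage from the identity $[\pi^*L]=[\pi^*\mathcal{O}_X]$ in $K_0(\mathscr{T}^c)$ to an actual contradiction is explicitly deferred to an unconstructed ``numerically tilting-like object'' on the normalization.

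The missing endgame can be supplied far more directly, and without the Euler form. Once $A_0=\Lambda$ is finite dimensional of finite global dimension, $K_0(\mathscr{T}^c)\cong K_0(\Lambda)\cong\mathbb{Z}^n$ is finitely generated. Since $\pi_*\pi^*\simeq\mathrm{id}$ and $\pi_*(\mathscr{T}^c)\subset\rD^b(\coh(X))$, the canonical map $K_0(\rD^{\text{perf}}(X))\to K_0(\rD^b(\coh(X)))$ factors through $K_0(\mathscr{T}^c)$, so its image is finitely generated; but that image contains the classes of all line bundles on $X$, and these generate a non-finitely-generated subgroup (restrict to the smooth locus, where the determinant detects line bundles, and use that $\Pic(X)$ surjects onto the non-finitely-generated group $\Pic$ of the smooth locus when $k=\bar k$ and the geometric genus is $\geq 1$). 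This is precisely the Picard/Grothendieck-group argument of the cited proof. Alternatively, your own identity closes the same way: applying $K_0(\pi_*)$ gives $[L]=[\mathcal{O}_X]$ in $K_0(\rD^b(\coh(X)))$ for every $L\in\Pic^0(X)$, and restricting to the smooth locus and taking determinants forces $L$ to be trivial there, contradicting the positive-dimensionality of $\Pic^0(X)$. Either way, the detour through the normalization and a manufactured tilting-like object is unnecessary.
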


\begin{rmk}
The proof of Theorem \ref{thm: full exceptional collection alg closed field} depends on a careful study of the Picard group of $X$ and the Grothendieck group of the triangulated categories. In particular it involves the fact that Pic$(X)$ is not finitely generated if $X$ is a projective curve with geometric genus $\geq 1$ over an algebraically closed field $k$.

However, if the base field $k$ is not algebraically closed, then  Pic$(X)$ may be finitely generated even if the geometric genus of $X$ is $\geq 1$. Therefore the proof of Theorem \ref{thm: full exceptional collection alg closed field} in \cite{wei2016full} does not work if $k$ is not algebraically closed. See \cite{wei2016full} Remark 9.
\end{rmk}

In order to generalized Theorem \ref{thm: full exceptional collection alg closed field} to curves over non-algebraically closed fields, we need to seek a different proof. The key fact is the following lemma.

\begin{lemma}\label{lemma: scalar extension and full exc collection}
Let $B$ be a DG algebra over a field $k$ and assume Perf$(B)$ has a full exceptional collection $\langle E_1,\ldots, E_n\rangle$. Then for any field extension $k^{\prime}/k$, $\langle (E_1)_{k^{\prime}},\ldots, (E_n)_{k^{\prime}} \rangle$  is a full exceptional collection of Perf$(B_{k^{\prime}})$.
\end{lemma}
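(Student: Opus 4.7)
The plan is to verify two separate things: that the base-changed collection is still exceptional and semi-orthogonal, and that it still generates $\Perf(B_{k^{\prime}})$.

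First, for the Hom conditions I would invoke Lemma \ref{lemma: scalar extension of morphisms in derived category}, whose hypotheses are met because each $E_i$ lies in $\Perf(B)$. It gives the natural isomorphism
$$
\Hom_{\rD(B_{k^{\prime}})}\bigl((E_i)_{k^{\prime}},(E_j)_{k^{\prime}}[m]\bigr) \;\cong\; \Hom_{\rD(B)}(E_i,E_j[m]) \otimes_k k^{\prime}
$$
for every $i,j \in \{1,\ldots,n\}$ and every $m \in \mathbb{Z}$. Exceptionality and the required semi-orthogonality of $\langle E_1,\ldots,E_n\rangle$ transport immediately across this isomorphism: one obtains $\Hom((E_i)_{k^{\prime}},(E_i)_{k^{\prime}}) = k^{\prime}$ in degree zero, vanishing of all nonzero shifted self-Homs, and vanishing of the semi-orthogonal Homs between distinct $E_i,E_j$ after base change (since $0 \otimes_k k^{\prime} = 0$).

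For fullness, I would use that the base-change functor $(-)\otimes_B^{\mathbf{L}} B_{k^{\prime}} = (-)\otimes_k k^{\prime}$ is triangulated, preserves shifts, cones, direct summands and arbitrary direct sums, and sends $B$ to $B_{k^{\prime}}$. Since $\langle E_1,\ldots,E_n\rangle$ is full in $\Perf(B)$, the thick subcategory generated by the $E_i$ coincides with $\Perf(B)$; in particular $B$ itself arises from the $E_i$ by finitely many shifts, cones and direct summands. Applying the base-change functor to such a presentation realises $B_{k^{\prime}}$ as an object of the thick subcategory of $\Perf(B_{k^{\prime}})$ generated by the $(E_i)_{k^{\prime}}$. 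Since $B_{k^{\prime}}$ is itself a classical generator of $\Perf(B_{k^{\prime}})$ by Lemma \ref{lemma: characterization of compact objects}, this thick subcategory must coincide with all of $\Perf(B_{k^{\prime}})$.

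I do not expect a serious obstacle here; the main point to watch is that each structural property invoked — the Hom-base-change isomorphism, preservation of perfectness, and the fact that base change commutes with cones and summands — really is available in the DG-algebra setting. These all follow from the flatness of $k^{\prime}/k$ together with the compactness of the $E_i$, so the argument reduces to assembling the lemmas already established in Section \ref{section: Scalar extensions of derived categories of DG algebras}.
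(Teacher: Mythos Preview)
Your proposal is correct and follows essentially the same approach as the paper: both use Lemma~\ref{lemma: scalar extension of morphisms in derived category} to transport the exceptionality and semi-orthogonality conditions across the base change, and both obtain fullness by noting that $B$ lies in the subcategory generated by the $E_i$, so applying the exact base-change functor places $B_{k^{\prime}}$ in the subcategory generated by the $(E_i)_{k^{\prime}}$. Your write-up is slightly more explicit about direct summands and about invoking Lemma~\ref{lemma: characterization of compact objects} to conclude that $B_{k^{\prime}}$ classically generates $\Perf(B_{k^{\prime}})$, but the underlying argument is identical.
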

\begin{proof}
By Lemma \ref{lemma: scalar extension of morphisms in derived category} we know $\Hom_{\rD(B_{k^{\prime}})}((E_i)_{k^{\prime}},(E_j)_{k^{\prime}})=\Hom_{\rD(B)}(E_i,E_j)\otimes k^{\prime}$ therefore it is clear that $\langle (E_1)_{k^{\prime}},\ldots, (E_n)_{k^{\prime}} \rangle$  is an  exceptional collection of Perf$(B_{k^{\prime}})$.

To show that $\langle (E_1)_{k^{\prime}},\ldots, (E_n)_{k^{\prime}} \rangle$ is full, it is sufficient to show that $B_{k^{\prime}}$ is contained in the triangulated subcategory generated by $(E_1)_{k^{\prime}},\ldots, (E_n)_{k^{\prime}}$. We know that $B$ is contained in the triangulated subcategory generated by $E_1,\ldots, E_n$, i.e. $B$ could be obtained from $E_1,\ldots, E_n$ by taking exact triangles and shifts finitely many times. Tensoring all the exact triangles with $k^{\prime}$ we see that $B_{k^{\prime}}$ is contained in the triangulated subcategory generated by $(E_1)_{k^{\prime}},\ldots, (E_n)_{k^{\prime}}$.
\end{proof}

Then we could generalize Theorem \ref{thm: full exceptional collection alg closed field} to curves over non-algebraically closed fields.

\begin{thm}\label{thm: full exceptional collection non-alg closed field}
Let $X$ be a projective curve over a  field $k$. Then $X$ has a categorical resolution which admits a full exceptional collection if and only if the geometric genus of $X$ is $0$.
\end{thm}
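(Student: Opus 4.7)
The plan is to derive both directions from the algebraically closed case, Theorem \ref{thm: full exceptional collection alg closed field}, by invoking the scalar-extension machinery of Sections \ref{section: Scalar extensions of derived categories of DG algebras} and \ref{section: Scalar extensions of categorical resolutions}.

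For the \emph{only if} direction, suppose $X$ admits a categorical resolution $(\mathscr{T}, \pi^*, \pi_*)$ whose compact subcategory $\mathscr{T}^c$ carries a full exceptional collection $\langle E_1, \ldots, E_n\rangle$. Then $\bigoplus_i E_i$ is a compact generator of $\mathscr{T}$, so by Proposition \ref{prop: a categorical resolution gives a categorical resolution of dga} we may repackage the data as an algebraic categorical resolution $(A, B, T)$ of $X$ in the sense of Definition \ref{defi: categorical resolution our form}, with $\Perf(B) \simeq \mathscr{T}^c$ inheriting the exceptional collection. Fixing an algebraic closure $\bar{k}$ of $k$, Proposition \ref{prop: scalar extension of a categorical resolution is a categocial resolution} produces an algebraic categorical resolution $(A_{\bar{k}}, B_{\bar{k}}, T_{\bar{k}})$ of $X_{\bar{k}}$, and Lemma \ref{lemma: scalar extension and full exc collection} promotes the exceptional collection to a full exceptional collection $\langle (E_1)_{\bar{k}}, \ldots, (E_n)_{\bar{k}}\rangle$ of $\Perf(B_{\bar{k}})$. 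Since the geometric genus is preserved under base change to the algebraic closure, Theorem \ref{thm: full exceptional collection alg closed field} applied over $\bar{k}$ forces the geometric genus of $X$ to be $0$.

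For the \emph{if} direction, the idea is to construct a categorical resolution of $X$ with a full exceptional collection directly, following the blueprint in \cite{wei2016full}. When $p_g(X) = 0$, the normalization $\nu \colon \tilde{X} \to X$ is a disjoint union of smooth genus-$0$ curves over finite extensions of $k$, and each such component is a Brauer--Severi curve whose bounded derived category admits a full exceptional collection. Gluing $\rD^b(\tilde{X})$ with finite-length modules supported at the singular points of $X$ should produce a smooth DG algebra $B$ together with a bimodule $T$ assembling into an algebraic categorical resolution of $X$, with a full exceptional collection on $\Perf(B)$ built from the exceptional collections on the Brauer--Severi components and the skyscraper-type contributions at the singularities.

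The main obstacle is the \emph{if} direction: over a non-trivial Brauer--Severi curve the natural exceptional objects have endomorphism algebras that are division algebras over $k$ rather than $k$ itself, so one must verify that the gluing produces an exceptional collection in the sense used in this paper, and that the smoothness, compactness, and finite-dimensionality of Homs required by Definition \ref{defi: categorical resolution our form} remain intact over a non-algebraically closed base field. The \emph{only if} implication, by contrast, is essentially formal once the scalar-extension results of the previous sections are in place.
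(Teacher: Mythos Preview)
Your \emph{only if} direction is correct and is exactly the paper's argument: a full exceptional collection gives a compact generator, Proposition \ref{prop: a categorical resolution gives a categorical resolution of dga} turns the resolution into an algebraic one, Proposition \ref{prop: scalar extension of a categorical resolution is a categocial resolution} base-changes it to $\bar{k}$, Lemma \ref{lemma: scalar extension and full exc collection} transports the exceptional collection, and Theorem \ref{thm: full exceptional collection alg closed field} over $\bar{k}$ yields the contradiction.

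For the \emph{if} direction, the paper does not carry out the construction you sketch; it simply asserts that the proof of \cite{wei2016full} Proposition 4.1 goes through verbatim over an arbitrary base field. Your outline via the normalization and a gluing is in the same spirit, but you are right to flag the Brauer--Severi issue: when $\widetilde{X}$ has components that are non-split conics, the natural ``exceptional'' objects have endomorphism algebras that are quaternion division algebras rather than $k$, so whether $\rD^b(\widetilde{X})$ admits a full exceptional collection in the strict sense (with $\End(E_i)=k$) is not automatic. The paper does not address this point, and depending on the conventions in \cite{wei2016full} it may be a genuine lacuna there as well. If one is willing to accept exceptional objects with division-algebra endomorphisms, the construction goes through; if not, one should either restrict to the case where all components of $\widetilde{X}$ have rational points, or supply an additional argument. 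In short: your \emph{only if} is complete and matches the paper, while your caution on the \emph{if} side is warranted and goes beyond what the paper actually writes.
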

\begin{proof}
The "if" part could be obtained by an explicit construction of a categorical resolution. In fact the proof is exactly the same as that of \cite{wei2016full} Proposition 4.1.

Now assume $X$ has geometric genus $\geq 1$ and there exists a categorical resolution $(\mathscr{T},\pi^*,\pi_*)$ such that $\mathscr{T}^c$ admits a full exceptional collection. Then $\mathscr{T}$ has a compact generator hence by Proposition \ref{prop: a categorical resolution gives a categorical resolution of dga} we have an algebraic categorical resolution $(A,B,T)$ of $X$ and  Perf$(B)\simeq \mathscr{T}^c$ has a full exceptional collection.

Let $\bar{k}$ be the algebraic closure of $k$ and $X_{\bar{k}}$ be the base change. It is clear that $X_{\bar{k}}$ also has geometric genus $\geq 1$. Moreover by Proposition \ref{prop: scalar extension of a categorical resolution is a categocial resolution}, $(A_{\bar{k}},B_{\bar{k}},T_{\bar{k}})$ is a categorical resolution of $X_{\bar{k}}$. By Lemma \ref{lemma: scalar extension and full exc collection} we know Perf$(B_{\bar{k}})=\rD(B_{\bar{k}})^c$ has a full exceptional collection, which is contradictory to Theorem \ref{thm: full exceptional collection alg closed field}.
\end{proof}

We also have the generalization of \cite{wei2016full} Theorem 4.10 to non-algebraically closed base field.
\begin{thm}\label{thm: tilting object non-alg closed field}
Let $X$ be a projective curve with geometric genus $\geq 1$ over a  field $k$ and  $(\mathscr{T},\pi^*,\pi_*)$ be a categorical resolution of $X$. Then $\mathscr{T}^c$ cannot have a tilting object, moreover there cannot be a finite dimensional $k$-algebra $\Lambda$ of finite global dimension such that
$$
\mathscr{T}^c\simeq \rD^b(\Lambda\text{-mod}).
$$
\end{thm}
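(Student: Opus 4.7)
The plan is to mimic the proof of Theorem \ref{thm: full exceptional collection non-alg closed field}: reduce to the algebraically closed case via scalar extension and invoke Theorem \ref{thm: tilting object alg closed field}. So suppose for contradiction that $\mathscr{T}^c$ admits a tilting object $E$ (where tilting means $\Hom_{\mathscr{T}}(E,E[i])=0$ for $i\neq 0$ together with the fact that $E$ classically generates $\mathscr{T}^c$). Then in particular $E$ is a compact generator of $\mathscr{T}$, so by Proposition \ref{prop: a categorical resolution gives a categorical resolution of dga} there exists an algebraic categorical resolution $(A,B,T)$ of $X$ with $\rD(B)\simeq \mathscr{T}$ and $\Perf(B)\simeq \mathscr{T}^c$. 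Let $\bar{k}$ be an algebraic closure of $k$. Since $X_{\bar{k}}$ is a projective curve of geometric genus $\geq 1$, and $(A_{\bar{k}},B_{\bar{k}},T_{\bar{k}})$ is an algebraic categorical resolution of $X_{\bar{k}}$ by Proposition \ref{prop: scalar extension of a categorical resolution is a categocial resolution}, it suffices to produce a tilting object in $\Perf(B_{\bar{k}})$ to contradict Theorem \ref{thm: tilting object alg closed field}.

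The candidate is of course $E_{\bar{k}}$. For the Ext-vanishing, since $E\in \Perf(B)$, Lemma \ref{lemma: scalar extension of morphisms in derived category} gives
\[
\Hom_{\rD(B_{\bar{k}})}(E_{\bar{k}},E_{\bar{k}}[i])\cong \Hom_{\rD(B)}(E,E[i])\otimes_k \bar{k},
\]
which vanishes for $i\neq 0$. For generation, the argument in Lemma \ref{lemma: scalar extension and full exc collection} adapts verbatim: $B$ lies in the thick subcategory of $\Perf(B)$ generated by $E$, so applying $(-)\otimes_k \bar{k}$ to the finite sequence of cones, shifts and direct summands producing $B$ from $E$ shows that $B_{\bar{k}}$ lies in the thick subcategory of $\Perf(B_{\bar{k}})$ generated by $E_{\bar{k}}$. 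Hence $E_{\bar{k}}$ is a tilting object in $\Perf(B_{\bar{k}})=\rD(B_{\bar{k}})^c$, contradicting Theorem \ref{thm: tilting object alg closed field}.

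For the second claim, suppose there is a finite dimensional $k$-algebra $\Lambda$ of finite global dimension with $\mathscr{T}^c\simeq \rD^b(\Lambda\text{-mod})$. Finite global dimension forces $\rD^b(\Lambda\text{-mod})\simeq \Perf(\Lambda)$, and under this equivalence $\Lambda$ itself is a tilting object of $\mathscr{T}^c$; by the first part we already have a contradiction. Alternatively, one can pass directly to $\bar{k}$: the algebra $\Lambda_{\bar{k}}=\Lambda\otimes_k \bar{k}$ is finite dimensional, and flatness of $\bar{k}/k$ together with base change of projective resolutions shows $\mathrm{gl.dim}(\Lambda_{\bar{k}})\le \mathrm{gl.dim}(\Lambda)<\infty$, while the chain of equivalences $\mathscr{T}^c_{\bar{k}}\simeq \Perf(B_{\bar{k}})$ (from the previous paragraph) and $\Perf(\Lambda)_{\bar{k}}\simeq \Perf(\Lambda_{\bar{k}})$ (from Proposition \ref{prop: dg morita equivalence is preserved under scalar extension}) produces a finite dimensional $\bar{k}$-algebra of finite global dimension whose bounded derived category is $\mathscr{T}^c_{\bar{k}}$, again contradicting Theorem \ref{thm: tilting object alg closed field}.

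The main obstacle is verifying that the two defining properties of a tilting object—Ext-vanishing and generation of the compact part—both survive base change; this is precisely what Lemma \ref{lemma: scalar extension of morphisms in derived category} and the thick-subcategory argument from Lemma \ref{lemma: scalar extension and full exc collection} deliver, so once the reduction via Proposition \ref{prop: a categorical resolution gives a categorical resolution of dga} and Proposition \ref{prop: scalar extension of a categorical resolution is a categocial resolution} is in place the proof is essentially formal.
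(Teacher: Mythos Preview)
Your proof is correct and follows exactly the approach the paper intends: the paper's own proof simply says ``similar to that of Theorem \ref{thm: full exceptional collection non-alg closed field} and left to the reader,'' and you have carried out precisely that reduction via Propositions \ref{prop: a categorical resolution gives a categorical resolution of dga} and \ref{prop: scalar extension of a categorical resolution is a categocial resolution}, Lemma \ref{lemma: scalar extension of morphisms in derived category}, and the generation argument of Lemma \ref{lemma: scalar extension and full exc collection}. One small caveat: in your alternative argument for the second claim, the step identifying $\Perf(B_{\bar{k}})$ with $\Perf(\Lambda_{\bar{k}})$ would require knowing that the triangulated equivalence $\Perf(B)\simeq\Perf(\Lambda)$ lifts to a DG Morita equivalence before Proposition \ref{prop: dg morita equivalence is preserved under scalar extension} applies; but your primary argument (that $\Lambda$ is itself a tilting object, reducing to the first part) is clean and suffices.
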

\begin{proof}
The proof is similar to that of Theorem \ref{thm: full exceptional collection non-alg closed field} and is left to the readers.
\end{proof}

\bibliography{basefieldextension}{}

\begin{thebibliography}{10}

\bibitem{barthel2014six}
Tobias Barthel, J.~P. May, and Emily Riehl.
\newblock Six model structures for {DG}-modules over {DGA}s: model category
  theory in homological action.
\newblock {\em New York J. Math.}, 20:1077--1159, 2014.

\bibitem{bokstedt1993homotopy}
Marcel B{\"o}kstedt and Amnon Neeman.
\newblock Homotopy limits in triangulated categories.
\newblock {\em Compositio Math.}, 86(2):209--234, 1993.

\bibitem{bondal2003generators}
A.~Bondal and M.~van~den Bergh.
\newblock Generators and representability of functors in commutative and
  noncommutative geometry.
\newblock {\em Mosc. Math. J.}, 3(1):1--36, 258, 2003.

\bibitem{hall2015algebraic}
Jack Hall and David Rydh.
\newblock Algebraic groups and compact generation of their derived categories
  of representations.
\newblock {\em Indiana Univ. Math. J.}, 64(6):1903--1923, 2015.

\bibitem{keller2006differential}
Bernhard Keller.
\newblock On differential graded categories.
\newblock In {\em International {C}ongress of {M}athematicians. {V}ol. {II}},
  pages 151--190. Eur. Math. Soc., Z\"urich, 2006.

\bibitem{kuznetsov2008lefschetz}
Alexander Kuznetsov.
\newblock Lefschetz decompositions and categorical resolutions of
  singularities.
\newblock {\em Selecta Math. (N.S.)}, 13(4):661--696, 2008.

\bibitem{kuznetsov2014categorical}
Alexander Kuznetsov and Valery~A. Lunts.
\newblock Categorical resolutions of irrational singularities.
\newblock {\em Int. Math. Res. Not. IMRN}, (13):4536--4625, 2015.

\bibitem{lunts2010categorical}
Valery~A. Lunts.
\newblock Categorical resolution of singularities.
\newblock {\em J. Algebra}, 323(10):2977--3003, 2010.

\bibitem{lunts2010uniqueness}
Valery~A. Lunts and Dmitri~O. Orlov.
\newblock Uniqueness of enhancement for triangulated categories.
\newblock {\em J. Amer. Math. Soc.}, 23(3):853--908, 2010.

\bibitem{lunts2016new}
Valery~A. Lunts and Olaf~M. Schn\"urer.
\newblock New enhancements of derived categories of coherent sheaves and
  applications.
\newblock {\em J. Algebra}, 446:203--274, 2016.

\bibitem{rizzardo2017note}
Alice Rizzardo and Michel Van~den Bergh.
\newblock A note on non-unique enhancements.
\newblock {\em arXiv preprint arXiv:1701.00830}, 2017.

\bibitem{rouquier2008dimensions}
Rapha{\"e}l Rouquier.
\newblock Dimensions of triangulated categories.
\newblock {\em J. K-Theory}, 1(2):193--256, 2008.

\bibitem{sosna2014scalar}
Pawel Sosna.
\newblock Scalar extensions of triangulated categories.
\newblock {\em Appl. Categ. Structures}, 22(1):211--227, 2014.

\bibitem{stacks-project}
The {Stacks Project Authors}.
\newblock \itshape stacks project.
\newblock \url{http://stacks.math.columbia.edu}, 2016.

\bibitem{toen2007homotopy}
Bertrand To{\"e}n.
\newblock The homotopy theory of {$dg$}-categories and derived {M}orita theory.
\newblock {\em Invent. Math.}, 167(3):615--667, 2007.

\bibitem{toen2007moduli}
Bertrand To{\"e}n and Michel Vaqui{\'e}.
\newblock Moduli of objects in dg-categories.
\newblock {\em Ann. Sci. \'Ecole Norm. Sup. (4)}, 40(3):387--444, 2007.

\bibitem{wei2016full}
Zhaoting Wei.
\newblock The full exceptional collections of categorical resolutions of
  curves.
\newblock {\em J. Pure Appl. Algebra}, 220(9):3332--3344, 2016.

\end{thebibliography}
\bibliographystyle{plain}
\end{document}